\pgfplotsset{compat=1.17}
\newtheorem{ass}{Assumption}
\newtheorem{thm}{Theorem} 
\newtheorem{rmk}{Remark}
\newtheorem{lem}{Lemma}
\newcommand\eqdef{\mathrel{\stackrel{\makebox[0pt]{\mbox{\normalfont\tiny def}}}{=}}}
\NewDocumentCommand{\llm}{ O{2} O{\Xi}}{\widehat{\mathfrak{L}\pi}_{#1,#2}}
\NewDocumentCommand{\llmper}{ O{2} O{\Xi}}{\widehat{\mathfrak{L}\pi}_{#1,#2}^{per}}
\NewDocumentCommand{\llmperc}{ O{2} O{\Xi}}{\widehat{\mathfrak{L}\pi}_{#1,#2}^{c,per}}
\NewDocumentCommand{\LLM}{ O{i} }{\widehat{\mathfrak{L}\Pi}^{#1}}
\NewDocumentCommand{\pLLM}{ O{i} }{{\mathfrak{L}\Pi}^{#1}}
\NewDocumentCommand{\LLMper}{ O{i} }{{\widehat{\mathfrak{L}\Pi}^{#1}}}
\NewDocumentCommand{\pLLMper}{ O{i} }{{{\mathfrak{L}\Pi}^{#1}}}
\NewDocumentCommand{\SCHUMAKER}{ O{i} }{\widehat{\mathfrak{L}\Pi}^{#1}}
\newcommand{\vertiii}[1]{{\left\vert\kern-0.25ex\left\vert\kern-0.25ex\left\vert #1 
    \right\vert\kern-0.25ex\right\vert\kern-0.25ex\right\vert}}
\crefname{hypothesis}{Hypothesis}{Hypotheses}
\title{Energy conservative isogeometric methods for the wave equation\thanks{Submitted to the editors DATE.
}}
\author{Andrea Bressan\thanks{IMATI-CNR Pavia (\email{andrea.bressan@imati.cnr.it}).} \and Annalisa Buffa\thanks{École Polytechnique Fédérale de Lausanne  (\email{annalisa.buffa@epfl.ch}, \email{rafael.vazquez@epfl.ch}).}
\and Alen Kushova\thanks{Università degli studi di Pavia (\email{alen.kushova01@universitadipavia.it}).}\and Rafael Vázquez \footnotemark[3]}
\newcommand*{\addFileDependency}[1]{
  \typeout{(#1)}
  \@addtofilelist{#1}
  \IfFileExists{#1}{}{\typeout{No file #1.}}
}
\newcommand*{\myexternaldocument}[1]{%
    \externaldocument{#1}%
    \addFileDependency{#1.tex}%
    \addFileDependency{#1.aux}%
}
\begin{document}

\maketitle

\begin{abstract}
  We analyze the wave equation in mixed form, with periodic and/or 
  Dirichlet homogeneous boundary conditions, and nonconstant coefficients 
  that depend on the spatial variable. For the discretization, the weak 
  form of the second equation is replaced by a strong form, written in 
  terms of a projection operator. The system of equations is discretized 
  with B-splines forming a De Rham complex along with suitable commutative 
  projectors for the approximation of the second equation. The discrete 
  scheme is energy conservative when discretized in time with a 
  conservative method such as Crank-Nicolson. 
  We propose a convergence analysis of the method to study the dependence 
  with respect to the mesh size $h$, with focus on the consistency error. 
  Numerical results show optimal convergence of the error in energy norm, 
  and a relative error in energy conservation for long-time simulations of 
  the order of machine precision. 
\end{abstract}

\begin{keywords}
  Isogeometric Analysis, wave equation, splines, quasi-interpolants, energy conservation.
\end{keywords}

\begin{AMS}
  
\end{AMS}

\section{Introduction}
\label{sec:introduction}
The purpose of this paper is to provide a numerical analysis of 
discretization schemes applied to the anisotropic wave equation in mixed form, 
in which one of the variational equations is replaced by a suitably 
projected equation within the discrete space, introducing a consistency error. 
The motivation for our work stems from analogous discretization techniques recently proposed for 
Maxwell's equations, specifically in the long-term study and simulation of 
particle dynamics in plasma, as well as for the electrodynamics of phenomena 
resulting from particle motion \cite{kraus2017gempic}, \cite{holderied2021mhd}. 
The simplified model problem of the wave equation allows us to analyze the proposed method,
paying particular attention to the introduced consistency error.

The wave equation has been studied extensively in theory 
and numerical approximations. Existence and uniqueness
results are well known in literature \cite{lions2000evolution},\cite{evans10}. 
The present work considers the wave equation as a first order 
hyperbolic system, introducing velocity and pressure fields following 
approaches found in prior works \cite{becache2000analysis}, \cite{glowinski2004solution}, \cite{boffi2013convergence}.
For the resulting variational formulations in mixed form 
it is acknowledged that the successful approximation of the problem 
requires suitable compatibility conditions within the involved 
discrete spaces \cite{boffi2013mixed}. In general, mixed methods for the wave equation consider 
the discretization of vector fields in some $\mathbf{H}(\textup{div})$-conforming spaces while 
scalar fields in some $L^2$ spaces. To meet these conditions, we construct discrete spaces that 
adhere to the De Rham complex of exterior calculus \cite{arnold2006finite}, 
within the framework of isogeometric analysis \cite{hughes2005isogeometric}. 

Briefly, Isogeometric Analysis (IgA) utilizes spline functions, or their generalizations, 
for both representing the computational domain and approximating solutions to 
the partial differential equation that models the relevant problem. This approach aims to 
streamline the interoperability between computer-aided design and numerical simulations.
Moreover, IgA derives advantages from the approximation properties of splines, 
where their high continuity contributes to enhanced accuracy compared to $C^0$ piecewise polynomials. 
This characteristic is well-documented in literature, \cite{Evans_Bazilevs_Babuska_Hughes,bressan2018approximation,Sangalli2018}.
As regards the De Rham complex for tensor-product B-splines, initially introduced and analyzed by \cite{buffa2011isogeometric}, 
it has found wide applications in Galerkin approximation of Maxwell's equations \cite{ratnani2012arbitrary},
and divergence-free methods for incompressible fluid flow \cite{evans2013darcy},
\cite{evans2013steady},\cite{evans2013unsteady}. 

In this paper, together with the discrete B-spline spaces of the De Rham complex, 
we build quasi-interpolant projections that commute with the divergence operator. 
Various families of quasi-interpolant operators have been defined and presented in the context of 
spline approximation, \cite{de1973spline},\cite{lyche1975local}. 
Here, we implement a local quasi-interpolant operator, as presented in \cite{lee2000some},
wherein explicit formulas are provided for computing the coefficients of the projection. 
These explicit formulas entail pointwise evaluation of the function to be projected. 
Employing this operator, we project the second equation of the variational formulation, 
reducing the problem to a single equation for the velocity. Subsequently, the pressure field is 
determined after the computation of velocity. The resulting semi-discrete problem preserves 
the total energy of the system, as for a standard Galerkin method. 

Given our interest in preserving energy for long time simulations, 
the choice of an energy-preserving method in time is mandatory. 
We adopt Crank-Nicolson which is a second order energy 
conservative time discretization. Other time discretization methods that 
conserve energy can be applied with appropriate modifications 
to the fully discrete system. The numerical tests show a relative error 
in energy conservation for long time simulations of the order of machine 
precision.

Finally, error estimates in energy norm for Galerkin discretizations of mixed 
formulations are well known in literature \cite{boffi2013mixed}, 
and in particular for the wave equation in mixed form \cite{boffi2013convergence}.
Here, as the main contribution of this work, we present an error convergence analysis for our method with a 
generic family of projections, focusing on the consistency error. 
By assuming good approximation properties of the projections, 
together with its formal adjoint operator, we proof high-order 
convergence with respect to the mesh size $h$. Since the specific 
quasi-interpolant that we tested numerically does not fullfil the 
requires on the adjoint operator, we relax the assumption and proof 
that the numerical scheme converges linearly with respect to the 
mesh size. Numerical simulations confirm optimal high-order convergence 
for the implemented quasi-interpolant, similar to a standard Galerkin scheme,
which suggests an improvement of our theoretical results can be achieved. 
The global convergence of the scheme is of second order, as expected from the use of Crank-Nicolson method. 

The paper's structure is organized as follows. In section \ref{sec:wave} we present the 
model problem. In section \ref{sec:bsplines} we provide a brief overview of the isogeometric framework.
Section \ref{sec:discretization} introduces the discretization spaces along with commutative projections 
and presents the discrete problem. In section \ref{sec:error_analysis} we present the a priori error 
estimates in energy norm. Implementation details are covered in section \ref{sec:implementation_and_numerical_results}, 
with numerical results presented in section \ref{sec:numerical_simulations_in_2d}, and finally, in section
\ref{sec:conclusions} we draw our conclusions.

\section{Wave equation}
\label{sec:wave}
We start presenting the wave equation and its formulation as a first order hyperbolic system.
\subsection{Strong formulation and boundary conditions}
Let the domain $\Omega \subset \mathbb{R}^d$, with $d=2,3$, and let the time domain interval $I = [0,T]$ with $T > 0$. We consider our model problem, the second-order scalar wave equation with space dependent coefficients, that reads like
\begin{equation}
    \begin{cases}
    \label{eqn:2.1}
        u_{tt}-\text{div}(c^2\grad{u})= 0 & \text{in }\Omega \times I,\\
        u(\cdot ,0) = u_0,\quad u_t(\cdot,0) = u_1  & \text{in } \Omega,
    \end{cases}
\end{equation}
with the subscript $\cdot _t$ indicating the partial derivative with respect to time, $u_0$ and $u_1$ are the initial conditions, and the coefficient $c$ is a positive, uniformly bounded and smooth scalar field in $\Omega$. The problem has to be completed with boundary conditions, that we will detail below. Following the idea in \cite{glowinski2004solution}, by introducing the new variables, $\mathbf{v}=c\grad{u}$ and $\phi = u_t$, we rewrite \eqref{eqn:2.1} as a first order hyperbolic system, in the form
\begin{equation}
\label{eqn:2.2} 
    \begin{cases}
        \mathbf{v}_t = c\grad{\phi} & \text{in } \Omega\times[0,T],\\
        \phi_t = \text{div}(c \mathbf{v}) &  \text{in } \Omega\times[0,T],\\
        \mathbf{v}(\cdot, 0) =c\grad{u_0},\quad \phi(\cdot ,0) =u_1 & \text{in } \Omega.
    \end{cases}
\end{equation}

We will discretize the problem with an isogeometric method, for which we will assume that the domain is given by a parameterization of the form $\Omega = \mathcal{F}(\Hat{\Omega})$, where $\Hat{\Omega} = [0,1]^d$ is the parametric domain, and $\mathcal{F}$ is the isogeometric map to be detailed in Section \ref{sec:bsplines}. We split the boundary of $\Omega $ in two parts, with mutually disjoint interiors, denoted $\Gamma_D = \mathcal{F}(\hat{\Gamma}_D) $ and $\Gamma_P = \mathcal{F}(\hat{\Gamma}_P)$, respectively corresponding to Dirichlet and periodic boundary sides. Moreover, we assume that periodicity occurs only in the last parametric direction, and split the periodic boundary into two parts, $\Gamma_{P,1}$ and $\Gamma_{P,2}$, such that $\Gamma_{P} = \Gamma_{P,1} \cup \Gamma_{P,2}$, where $\Gamma_{P,i} = \mathcal{F}(\hat{\Gamma}_{P,i})$ for $i=1,2$ with $\hat{\Gamma}_{P,1} = [0,1]^{d-1}\times \{0\}$ and $\hat{\Gamma}_{P,2} = [0,1]^{d-1}\times \{1\}$. An illustration of the isogeometric map and the split of the boundary is given in Figure \ref{fig:new_fig_1}. 
\begin{figure}
    \centering
    \begin{tikzpicture}
        \draw[->] (0,0) -- (3.5,0); 
        \draw[->] (0,0) -- (0,3.5); 
        \draw[very thick] (0,0) rectangle (2,2); 
        \draw[->] (7,0) -- (10.5,0); 
        \draw[->] (7,0) -- (7,3.5);  
        \draw[very thick] (8.5,0) arc (0:90:1.5); 
        \draw[very thick] (10,0) arc (0:90:3);    
        \draw[very thick] (8.5,0) -- (10,0); 
        \draw[very thick] (7,1.5) -- (7,3); 
        \draw (0 cm,1pt) -- (0 cm,-1pt) node[anchor=north] {$0$};
        \draw (2 cm,1pt) -- (2 cm,-1pt) node[anchor=north] {$1$};
        \draw (3.5 cm,1pt) node[anchor=south] {$\hat{x}_1$};
        \draw (7 cm,1pt) -- (7 cm,-1pt) node[anchor=north] {$0$};
        \draw (8.5 cm,1pt) -- (8.5 cm,-1pt) node[anchor=north] {$1$};
        \draw (10 cm,1pt) -- (10 cm,-1pt) node[anchor=north] {$2$};
        \draw (10.5 cm,1pt) node[anchor=south] {$x_1$};
        \draw (1pt,2 cm) -- (-1pt,2 cm) node[anchor=east] {$1$};
        \draw (1pt,3.5 cm) node[anchor=west] {$\hat{x}_2$};
        \draw (7cm+1pt,1.5 cm) -- (7cm-1pt,1.5 cm) node[anchor=east] {$1$};
        \draw (7cm+1pt,3 cm) -- (7cm-1pt,3 cm) node[anchor=east] {$2$};
        \draw (7cm+1pt,3.5 cm) node[anchor=west] {$x_2$};
        \draw[very thick] (1,1) node {$\hat{\Omega}$}
            (1,-0.4) node {$\hat{\Gamma}_{P,1}$} 
            (1,2.3) node {$\hat{\Gamma}_{P,2}$} 
            (-0.3,1) node {$\hat{\Gamma}_{D}$} 
            (2.3,1) node {$\hat{\Gamma}_{D}$} 
            (8.8,1) node {$\Omega$}; 
        \draw[->] (9.25,0) -- (9.25,-1);
        \draw[->] (7,2.25) -- (6,2.25);
        \draw (9.55,-0.5) node {$\mathbf{n}_1$}
            (9.25,0.2) node {$\Gamma_{P,1}$}
            (9.25,2.4) node {$\Gamma_{D}$}
            (6.5,2.45) node {$\mathbf{n}_2$} 
            (7.35,2.25) node {$\Gamma_{P,2}$} 
            (7.75,1) node {$\Gamma_{D}$};
        \draw[->,thick] (3,2) ..controls (3.9,2.4) and (4.6,2.5) .. (5.5,2.5);
        \draw[very thick] (4.5,2.6) node {$\mathcal{F}$};
    \end{tikzpicture}
    \caption{Isogeometric map for the quarter of a bidimensional ring $\Omega$ with mixed Dirichlet and periodic boundaries.}
    \label{fig:new_fig_1}
\end{figure}
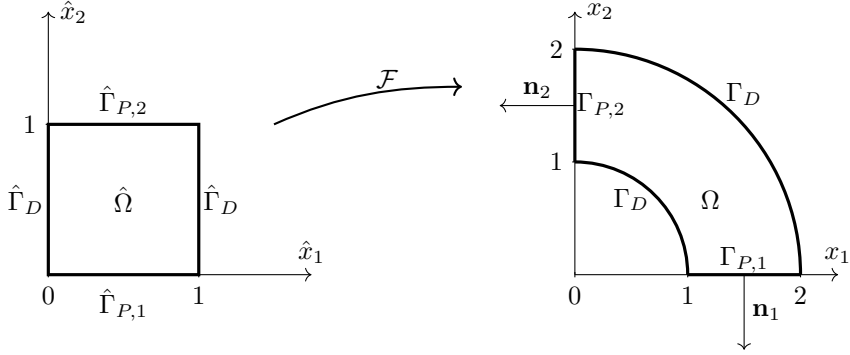
Then, we impose the Dirichlet boundary condition
\begin{equation*}
        u = g \qquad \text{on } \Gamma_D \times I,
\end{equation*}
{ where we assume for simplicity that $g$ does not depend on the time $t$.}
To impose the periodic boundary conditions, we introduce for scalar fields the trace operator $\gamma:H^1(\Omega) \to H^{\frac{1}{2}}(\partial \Omega)$, $\gamma: u \mapsto u\vert_{\partial \Omega} $, and for vector fields the normal trace operator ${\gamma}_{n}: H(\text{div};\Omega) \to H^{-\frac{1}{2}}(\partial \Omega)$, $\gamma_{n}:\mathbf{v}\mapsto \mathbf{v}\cdot\mathbf{n}$, with $\mathbf{n}$ the outgoing unit normal at $\partial\Omega$. When needed, we will restrict these operators to a part of the boundary, simply denoting this restriction as $\cdot\vert_{\Gamma_D}$ or $\cdot\vert_{\Gamma_{P,1}} $ and $ \cdot\vert_{\Gamma_{P,2}}$. With this notation, Dirichlet boundary condition reads like $\gamma(u)|_{\Gamma_D} = g$, for every $ t \in I$, and for simplicity we will assume that $g$ does not depend on $t$, while periodic boundary conditions read like, for every $t \in I$,
\begin{align*}
    \gamma(u)\big\vert_{\Gamma_{P,1}} = \gamma(u)\big\vert_{\Gamma_{P,2}},\quad & \quad  \gamma(u_t)\big\vert_{\Gamma_{P,1}} = \gamma(u_t)\big\vert_{\Gamma_{P,2}},\\
    {\gamma}_{n}(c\nabla u)\big\vert_{\Gamma_{P,1}} &= {\gamma}_{n}(c\nabla u)\big\vert_{\Gamma_{P,2}}. 
\end{align*}
In what follows, we make the assumption that $c$ is periodic, in the sense that its pull-back $\hat{c} = c \circ\mathcal{F}$, is periodic in the last parametric direction of $\hat \Omega$, that is, it satisfies periodicity conditions on $\hat{\Gamma}_P$. Therefore, the Dirichlet and periodic boundary conditions for the first order system \eqref{eqn:2.2} read as
\begin{align*}
&        \phi = 0 \quad \text{on } \Gamma_D \times I,\\
&    {\gamma}_{n}(\mathbf{v})\big\vert_{\Gamma_{P,1}} = {\gamma}_{n}(\mathbf{v})\big\vert_{\Gamma_{P,2}},\quad
    \gamma(\phi)\big\vert_{\Gamma_{P,1}} = \gamma(\phi)\big\vert_{\Gamma_{P,2}},\quad \text{ for all } t \in I.
\end{align*}
We remind that existence and uniqueness results for the solution $u$ are well known, see \cite[Section 7.2.2]{evans10}, while the regularity of the solution will depend on the regularity of the initial conditions \cite[Section 7.2.3]{evans10}. 

\subsection{Weak formulation and conservation of energy}
Let us define the following Hilbert spaces over $\Omega \subset \mathbb{R}^d$. $L^2(\Omega)$ is the usual Hilbert space 
of square integrable functions over $\Omega$, endowed with the classical $L^2$-norm $\|\cdot\|_{L^2(\Omega)}$. 
By $\mathbf{L}^2(\Omega)$ we denote its vectorial counterpart. The Hilbert spaces $H^k(\Omega)$ denote the functions 
in $L^2(\Omega)$ such that their $k$th-order derivatives also belong to $L^2(\Omega)$, and their vectorial counterparts 
will be denoted by $\mathbf{H}^k(\Omega)$. We define
\begin{align*}
    \mathbf{H} (c,\text{div};\Omega) &\coloneqq \{\mathbf{v} \in \mathbf{L}^2(\Omega): \text{div } (c\mathbf{v}) \in L^2 (\Omega) \},\\
    \mathbf{H}_P (c,\text{div};\Omega) &\coloneqq \{\mathbf{v} \in \mathbf{H} (c,\text{div};\Omega): {\gamma}_{n}(c\mathbf{v})|_{\Gamma_{P,1}} = {\gamma}_{n}(c\mathbf{v})|_{\Gamma_{P,2}}\},
\end{align*}
We will also make use of the space $\mathbf{H}^k(\textup{div};\Omega)$, the space of functions in $\mathbf{H}^k(\Omega)$ 
such that their divergence belongs to ${H}^k(\Omega)$.
We can then write the weak formulation of \eqref{eqn:2.2}, that is: find  $\mathbf{v}\in \mathbf{H}_P(c,\text{div};\Omega)$ and $\phi \in L^2(\Omega)$ such that the following equations hold
\begin{subequations}
\label{eqn:2.3and2.4}
\begin{align}
\label{eqn:2.3}
    \int_\Omega \mathbf{v}_t\cdot \mathbf{w}\ \mathrm{d}\mathbf{x} &= - \int_\Omega \text{div}(c\mathbf{w})\ \phi\ \mathrm{d}\mathbf{x} \quad &\forall \mathbf{w} \in\mathbf{H}_P (c,\text{div};\Omega), \\  
    \label{eqn:2.4}
    \int_\Omega \phi_t \psi\ \mathrm{d}\mathbf{x} &=  \int_\Omega \text{div}(c\mathbf{v})\ \psi\ \mathrm{d}\mathbf{x}, \quad &\forall \psi \in L^2 (\Omega).
\end{align}
\end{subequations}
Notice that adding equation \eqref{eqn:2.3} to equation \eqref{eqn:2.4} and choosing the test functions as $\mathbf{w}= \mathbf{v}$ and $\psi = \phi $, we obtain
\begin{equation*}
    \int_\Omega \mathbf{v}_t \cdot \mathbf{v} \mathrm{d}\mathbf{x} + \int_\Omega \phi_t\phi \mathrm{d}\mathbf{x} = 0,
\end{equation*}
and defining the quantity of total energy as
\begin{equation}
\label{eqn:2.5}
    E(t)\coloneqq \frac{1}{2} \int_\Omega |\mathbf{v}|^2 + |\phi|^2  \mathrm{d}\mathbf{x},
\end{equation}
it is obviously seen that $\frac{\mathrm{d}E(t)}{\mathrm{d}t}  = 0$, that is, the energy is preserved. 

{Building upon the work conducted by Kraus \textit{et al.} in the field of magnetohydrodynamics \cite{kraus2017gempic,holderied2021mhd}, 
we aim to introduce and analyze, for this simplified model problem, a numerical method that preserves the total energy of the system for long-term simulations,
and for which \eqref{eqn:2.3} is solved in a weak sense, while \eqref{eqn:2.4} is solved in strong form.}
To achieve this purpose, we introduce a generic linear operator
\begin{equation*}
    \mathcal{T}: L^2(\Omega) \to L^2(\Omega), 
\end{equation*}
and replace the second equation \eqref{eqn:2.4} by
\begin{equation}
\label{eqn:2.6}
    \phi_t = \mathcal{T} ( \text{div}(c\mathbf{v})).
\end{equation}
Note that we recover \eqref{eqn:2.4} if $\mathcal{T}$ is the identity operator. This change in the equation affects the conservation of the total energy of the system. In order to retrieve this conservation we replace \eqref{eqn:2.3} with
\begin{equation*}
        \int_{\Omega} \mathbf{v}_t\cdot\mathbf{w}\ \mathrm{d}\mathbf{x} = -\int_{\Omega} \mathcal{T}(\text{div}(c\mathbf{w}))\phi\ \mathrm{d}\mathbf{x}.
\end{equation*}
The modified problem that we obtain, with mixed homogeneous Dirichlet and periodic boundary conditions, reads: find $\mathbf{v}\in \mathbf{H}_P(c,\text{div};\Omega)$ and $\phi \in L^2(\Omega)$ such that the following equations hold 
\begin{subequations} \label{eq:problem_mixed_form_modified}
\begin{equation}
    \label{eqn:2.7}
    \int_{\Omega} \mathbf{v}_t\cdot\mathbf{w}\ \mathrm{d}\mathbf{x} = -\int_{\Omega} \mathcal{T}(\text{div}(c\mathbf{w}))\phi\ \mathrm{d}\mathbf{x}, \quad \forall \mathbf{w} \in\mathbf{H}_P(c,\text{div};\Omega),
\end{equation}
\begin{equation}
    \label{eqn:2.8}
    \phi_t = \mathcal{T} ( \text{div}(c\mathbf{v})).
\end{equation}
\end{subequations}
\section{B-splines and IGA framework}
\label{sec:bsplines}
In this section we recall the definition of B-splines in the univariate and 
multivariate context. We also {introduce the isogeometric spaces, 
together with their regularity assumptions.} 
\subsection{Univariate B-splines}
\label{sec:sub_univariate_bsplines}
For the definition of B-splines we follow the notation and the guidelines of 
\cite{inbook2016Buffa} and \cite{lyche2008spline}. Let us introduce a knot vector 
$\Xi = \{\xi_1,\dots , \xi_{n+p+1}\}$, where $\xi_i\in \mathbb{R}$ is the $i$-th knot, 
$p$ is the polynomial degree and $n$ is the number of basis functions. We assume that 
$ \xi_1 \leq \xi_2 \leq \dots \leq \xi_{n+p} \leq \xi_{n+p+1},$ and without loss of 
generality $\xi_1 = 0,\ \xi_{n+p+1} = 1$. We also admit at most $p+1$ repeated knots.
We introduce the B-spline functions $\{\hat{B}_{i,p}\}_{i=1}^{n}$ of degree $p$ over 
the knot vector $\Xi$ following the Cox-DeBoor recursive formula \cite[Chapter IX]{de1978practical}. 
We obtain a set of $n$ B-splines with the following properties: non-negativity, 
partition of unity and local support of each basis function, see \cite{inbook2016Buffa}.
The univariate B-spline space of degree $p$ over the knot vector $\Xi$ is:
\begin{equation*}
    \hat{S}_p(\Xi) \coloneqq \text{span}\{\hat{B}_{i,p}: i = 1,\dots,n \}.
\end{equation*}
We introduce also the vector $\mathbf{Z}= \{\zeta_1,\dots,\zeta_{z}\}$, 
of knots without repetitions, which are also called breakpoints, and denote with 
$m_j$ the multiplicity of $\zeta_j$, such that $\sum_{j = 1 }^{z} m_j = n+p+1$, 
and $\zeta_1 = \xi_1 = \dots = \xi_{m_1}$, $\zeta_2 = \xi_{m_1+1} = \dots = \xi_{m_1 + m_2} $ 
and so on. Notice that
\begin{equation*}
    \Xi = \{\underbrace{\zeta_1, \dots, \zeta_{1}}_{m_1 \text{ times}},\underbrace{\zeta_2, \dots, \zeta_{2}}_{m_2 \text{ times}},\dots,\underbrace{\zeta_{z}, \dots, \zeta_{z}}_{m_{z} \text{ times}}\},
\end{equation*}
with each $\zeta_j$ repeated $m_j$ times, and $1\leq m_j \leq p+1 $ for all internal 
knots. { The breakpoints form a partition of the unit interval, and we say the 
partition is locally quasi-uniform if there exists a constant $\beta \geq 1$ independent of $z$ such that 
$$
\beta^{-1} \leq \dfrac{\zeta_{j+1} -\zeta_{j}}{\zeta_{j} - \zeta_{j-1}}\leq \beta, \quad \forall j = 2,\dots,z-1.
$$
}

Assuming that the multiplicity of internal knots is at most $p$, we can write the derivative of a B-spline as follows 
\begin{equation}
\label{eqn:3.1}
    \frac{\partial \hat{B}_{i,p}}{\partial \xi}(\xi) =  \hat{D}_{i-1,p-1}(\xi) - \hat{D}_{i,p-1}(\xi),
\end{equation}
with the Curry-Schoenberg spline basis: 
\begin{equation}
    \label{eqn:3.1.1}
        \hat{D}_{i,p-1}(\xi) = \frac{p}{\xi_{i+p+1}-\xi_{i+1}} \hat{B}_{i+1,p-1}(\xi),\quad \text{for } i = 1,\dots ,n-1.
    \end{equation}    
where we assumed that $\hat{D}_{1,p-1}(\xi)=\hat{D}_{n+1,p-1}(\xi)=0$.
Note that the derivative belongs to the spline space 
$\hat{S}_{p-1}(\Xi') $ where $\Xi' = \{\xi_2,\dots,\xi_{n+p}\}$. 

In order to handle periodic boundary conditions, we follow 
the construction of periodic B-spline spaces, see for instance 
\cite{montardini2018isogeometric}. Consider a uniformly spaced 
\textit{closed} knot vector $\Xi$ of degree $p$, { that is $\xi_1 < \dots < \xi_{p+1}=0$ and $1=\xi_{n+1} < \dots < \xi_{n+p+1}$ with at least 
$p$ internal knots.} We can construct a 
periodic basis on such knot vector, by identifying the basis 
functions of the B-spline space $\hat{S}_{p}(\Xi)$ in this way:
\begin{equation*}
    \begin{cases}
        \hat{B}_{i,p}^{Per} \coloneqq \hat{B}_{i,p} + \hat{B}_{n-p+i,p}, &\text{for } i = 1, \dots ,p;\\
        \hat{B}_{i,p}^{Per} = \hat{B}_{i,p}, &\text{otherwise.}
    \end{cases}
\end{equation*}
\begin{figure}
    \centering
    \includegraphics[width=12cm,height=4cm]{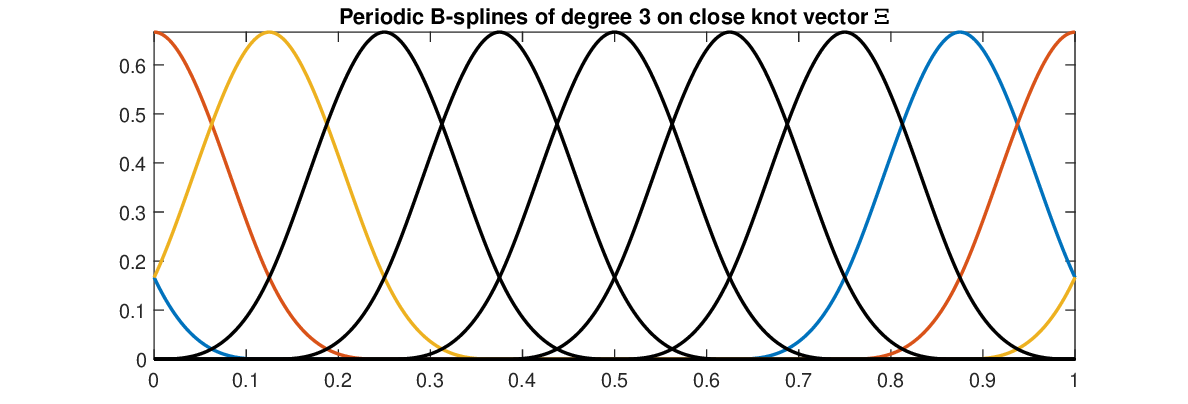}
    \caption{B-spline basis of degree 3 in the periodic case. In color the first three B-spline basis. The left hand side matches with regularity $C^2$ with their         right hand side. }
    \label{fig:1new}
\end{figure}
Here we are gluing together the first $p$ basis functions with the last $p$, this way we get continuity of derivatives up to order $p-1$. In Figure \ref{fig:1new} is shown the behavior of this periodic basis at the boundary. We can introduce the periodic B-spline space with highest regularity at the boundary as: 
\begin{equation*}
      \hat{S}_{p}^{Per}(\Xi) \coloneqq \text{span}\{\hat{B}_{i,p}^{Per}: i = 1, \dots ,n-p\}.
\end{equation*}
It is easy to see that the dimension of the spline space $\hat{S}_{p}^{Per}(\Xi)$ is $n-p$, which is the same of $\hat{S}_{p-1}^{Per}(\Xi')$. Finally we note that equation \eqref{eqn:3.1} holds for periodic B-splines too. For more information and properties on B-splines, we refer the reader to \cite{de1978practical}. 

\subsection{Multivariate B-splines}
\label{sec:sub_multivariate_bsplines}
Multivariate B-splines are introduced and defined by tensor product, starting from univariate B-splines on each spatial direction.
Let $d$ be the space dimension, usually $d=2,3$, and assume $n_l \in \mathbb{N} $ the number of univariate basis functions in direction $l$, $p_l \in \mathbb{N}$ is the degree, while $\Xi_l = \{\xi_{l,1}, \dots, \xi_{l,n_l+p_l+1}\}$ and $\mathbf{Z}_l = \{\zeta_{l,1}, \dots, \zeta_{l,z_l}\}$ are respectively the knots and breakpoints in direction $l$. We also set the polynomial degree vector $\mathbf{p} = (p_1, \dots , p_d)$ and $\mathbf{\Xi} = \{\Xi_1 , \dots ,\Xi_d \}$. Finally we set $\mathbf{J} = \{ \mathbf{j} = (j_1,\dots,j_d) \subset \mathbb{N}^d : 1 \leq j_l \leq n_l \}$. The set of multivariate B-splines basis functions of degree vector $\mathbf{p}$ is
\begin{equation*}
    \{ \hat{B}_{\mathbf{i,p}}(\boldsymbol{\xi}) = \hat{B}_{i_1,p_1}(\xi_1) \dots \hat{B}_{i_d,p_d}(\xi_d), \text{for } \mathbf{i} \in \mathbf{J}\},
\end{equation*}
and the B-spline multivariate space of degree vector $\mathbf{p}$ over $\mathbf{\Xi}$ is 
\begin{equation*}
    \hat{S}_{\mathbf{p}}(\mathbf{\Xi} ) \coloneqq \text{span}\{ \hat{B}_{\mathbf{i,p}}(\boldsymbol{\xi}):\mathbf{i}\in \mathbf{J}\} = \otimes_{l=1}^d \hat{S}_{p_l}(\Xi_l).
\end{equation*}
A similar construction also applies for multivariate periodic B-splines. It is also possible to combine tensor product of standard B-splines in the first directions with periodic ones for the last direction, which is the case of the mixed boundary condition setting for our model problem. 

\begin{figure}
    \centering
    \begin{tikzpicture}
        \draw[very thick] (0,0) rectangle (3,3); 
        \draw[step=1cm] (0,0) grid (3,3); 
        \draw[very thick] (6.5,0) arc (0:90:1.5); 
        \draw (7,0) arc (0:90:2); 
        \draw (7.5,0) arc (0:90:2.5); 
        \draw[very thick] (8,0) arc (0:90:3);
        \draw[very thick] (6.5,0) -- (8,0); 
        \draw[very thick] (5,1.5) -- (5,3); 
        \draw (6.2990,0.750) -- (7.5981,1.5);
        \draw (5.75,1.2990) -- (6.5,2.5981);
        \filldraw[fill=green!20!white, draw=green!50!black] (1,1) rectangle (2,2);
        \filldraw[fill=green!20!white, draw=green!50!black] (6.7321,1) -- (7.1651,1.25) arc (30:60:2.5) -- (6,1.7321) arc (60:30:2); 
        \draw[very thick] (1.5,1.5) node {$\mathbf{Q}$}
        (6.6,1.6) node {$\mathbf{K}$}; 
        \draw[->,thick] (3.5,1) -- (5,1);
        \draw[very thick] (4,1.5) node {$\mathcal{F}$};
    \end{tikzpicture}
    \caption{Mesh $\widehat{\mathcal{M}}$ in the parametric domain, and its image $\mathcal{M}$ on the physical domain.}
    \label{nuova}
    \end{figure}
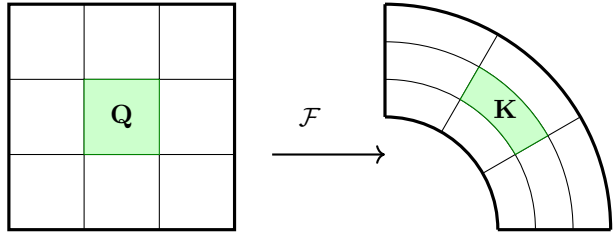
    
\subsection{Isogeometric analysis framework}
\label{sec:sub_isogeometric_analysis_framework}
The isogeometric map $\mathcal{F}: \hat{\Omega} \to \Omega$, is a parameterization of the geometry of the physical domain, based on B-splines (or more often in their rational counterpart of NURBS), usually indicated by 
\begin{equation*}
    \mathcal{F} \coloneqq \sum_{\mathbf{i}\in \mathbf{J}} \mathbf{c_i} \hat{B}_{\mathbf{i,p}}.
\end{equation*}
For the analysis of the discrete problem, we need to introduce 
some assumptions on $\mathcal{F}$. First, we introduce the parametric Bézier mesh $\widehat{\mathcal{M}}$, which is 
\begin{equation*}
    \widehat{\mathcal{M}} \coloneqq \{\mathbf{Q_j} = I_{1,j_1} \times \dots \times I_{d,j_d} : I_{l,j_l} = (\zeta_{l,j_l},\zeta_{l,j_l+1}), \text{ for } 1 \leq j_l \leq z_l-1\}.
\end{equation*}
{ Given an element $\mathbf{Q} \in \widehat{\mathcal{M}}$, we set $h_{\mathbf{Q}} = \textup{diam}(\mathbf{Q}_{\mathbf{j}})$, while $h = \max\{h_{\mathbf{Q} },\ \mathbf{Q}\in \widehat{\mathcal{M}}\}$.
Moreover, given $\mathbf{D} \subset \hat{\Omega}$, we denote by $\widetilde{{\mathbf{D}}}$ its support extension, that is the interior of the union of the supports of basis functions whose support intersects ${\mathbf{D}}$. }

The Bézier mesh is defined as the image of elements in 
$\widehat{\mathcal{M}}$ through $\mathcal{F}$:
\begin{equation*}
    \mathcal{M}\coloneqq \{ \mathbf{K}\subset \Omega : \mathbf{K} = \mathcal{F}(\mathbf{Q}), \mathbf{Q}\in \widehat{\mathcal{M}}\},
\end{equation*}
see Figure \ref{nuova}. 
{ We have $\widetilde{\mathcal{F}(\mathbf{D})} = \mathcal{F}(\widetilde{\mathbf{D}})$, for every subset $\mathbf{D} \subset \hat{\Omega}$, denoting the support extension in the physical domain. }
The first assumption is the following.
\begin{ass}\label{assumption_1}
We assume that $\mathcal{F}$ is a bi-Lipschitz homeomorphism. 
Moreover, $\mathcal{F}|_{\overline{\mathbf{Q}}}$ belongs to 
$\mathcal{C}^\infty(\overline{\mathbf{Q}})$ for all 
$\mathbf{Q}\in \widehat{\mathcal{M}}$, where 
$\overline{\mathbf{Q}}$ denotes the closure of $\mathbf{Q}$, 
and $\mathcal{F}^{-1}|_{\overline{\mathbf{K}}}$ belongs to 
$\mathcal{C}^\infty(\overline{\mathbf{K}})$, for all 
$\mathbf{K} \in \mathcal{M}$.
\end{ass}
This prevents the existence of singularities and self-intersections in the parameterization $\mathcal{F}$. 
The second assumption simplifies the dealing with the boundary. 
\begin{ass}\label{assumption_2}
The boundary region $\Gamma_D \subset \partial \Omega$ is the union of full faces of the boundary. More precisely, $\Gamma_D = \mathcal{F}(\hat{\Gamma}_D)$, with $\hat{\Gamma}_D$ a collection of full faces of the parametric domain $\hat{\Omega}$. Finally, if $\Gamma_P \neq \emptyset$, we assume the periodic boundary to be: $$ \Gamma_P = \mathcal{F}\big([0,1]^{d-1}\times\{0\}\big) \cup \mathcal{F}\big([0,1]^{d-1}\times\{1\}\big).$$
\end{ass}
{ Finally, we assume local quasi-uniformity of the univariate partitions, thus the parametric Bézier mesh is shape regular, that is
the ratio between the smallest edge of $\mathbf{Q}\in \widehat{\mathcal{M}}$ and its diameter $h_{\mathbf{Q}}$ is bounded uniformly with respect to 
$h$ and $\mathbf{Q}$. Analogously, the Bézier mesh is shape regular thanks to Assumption \ref{assumption_1}. 
}

\section{Discretization}
\label{sec:discretization}
{ In this section, our focus is on introducing the discretization of problem \eqref{eq:problem_mixed_form_modified}. 
We approximate $\mathcal{T}$ with quasi-interpolant projections that commute with the divergence operator. 
The Crank-Nicolson method is employed for temporal discretization, chosen for its conservativity. 
We emphasize that our objective is to study the approximation properties of the proposed method, both 
theoretically and numerically. To achieve this, we intentionally maintain a highly generic notation.
In \cref{sec:discretization_in_space_with commutative projections}, we present the spatial semi-discretization 
through the construction of isogeometric discrete spaces and the associated projections.
In \cref{sec:sub_quasi_interpolant_projections} we discuss a particularization of such projections, 
which are quasi-interpolant operators based on point evaluation of the functions to be projected.
This quasi-interpolant operators will be subjected to numerical study and testing within the proposed discretization framework.  
Finally, in \cref{sec:sub_conservative_semi_discretization_in_time} we recall the Crank-Nicolson semi discretization in time and 
the fully discrete problem is presented. 
}
\subsection{Discretization in space with commutative projections}
\label{sec:discretization_in_space_with commutative projections}
Here we introduce the discrete spaces for the mixed formulation given by equations \eqref{eqn:2.7} and \eqref{eqn:2.8}. 
We present the case $d = 2$, while the case $d = 3$ is completely analogous.
\subsubsection{Discrete spline spaces}
Let us standardize the notation following 
\cite{da2014mathematical}, and start with the case of Dirichlet boundary conditions, that is $\Gamma_P = \emptyset$, we define the spaces:
\begin{equation*}
    { X^1 \coloneqq \mathbf{H}(c,\text{div};\Omega)}, \quad X^2\coloneqq L^2(\Omega),
    \quad { \hat{X}^1 \coloneqq \mathbf{H}(\hat{c},\text{div};\hat{\Omega})},\quad \hat{X}^2 \coloneqq L^2(\hat{\Omega}). 
\end{equation*}
Thanks to Assumption \ref{assumption_1}, which states that both $\mathcal{F}$ and its inverse are smooth, we can define the pull-backs that relate
these spaces as (see \cite[Section 2.2]{hiptmair2002finite})
\begin{align*}
    \iota^1(\mathbf{f}) &\coloneqq \det(D\mathcal{F})(D\mathcal{F})^{-1} (\mathbf{f}\circ \mathcal{F}), &\mathbf{f} \in X^1,\\
    \iota^2(f) &\coloneqq \det(D\mathcal{F}) (f\circ \mathcal{F}), &f \in X^2,
\end{align*}
where $D\mathcal{F}$ is the Jacobian matrix of $\mathcal{F}$. Then, due to the divergence preserving property of the map $\iota^1$, 
see \cite[Section 3.9]{monk2003finite}, we have $\text{div} \circ \iota^1 = \iota^2 \circ \text{div}$. 
Fixed a polynomial degree vector $\mathbf{p}=(p_1,p_2)$ and $\mathbf{\Xi}= (\Xi_1,\Xi_2)$, we define the discrete spaces on the 
parametric domain as:
\begin{align*}
    \hat{X}^1_h &\coloneqq \hat{S}_{p_1,p_2-1}(\Xi_1,\Xi_2')\times \hat{S}_{p_1-1,p_2}(\Xi_1',\Xi_2),\\
    \hat{X}^2_h &\coloneqq \hat{S}_{p_1-1,p_2-1}(\Xi_1',\Xi_2').
\end{align*}
The choice of the bases follows from \cite[Section 5.2]{da2014mathematical} and \cite[Section 4]{ratnani2012arbitrary}, that is
\begin{equation*}
    \hat{X}^1_h = span\{ \mathcal{J}_1 \cup \mathcal{J}_2 \},
\end{equation*}
where we set 
\begin{align*}
     \mathcal{J}_1 &= \{ \hat{B}_{i_1,p_1}(\xi_1)\hat{D}_{i_2,p_2-1}(\xi_2)\mathbf{e_1} :\ 1\leq i_1 \leq n_1,\ 1\leq i_2 \leq n_2-1\},\\
     \mathcal{J}_2 &= \{ \hat{D}_{i_1,p_1-1}(\xi_1)\hat{B}_{i_2,p_2}(\xi_2)\mathbf{e_2} :\ 1\leq i_1 \leq n_1-1,\ 1\leq i_2 \leq n_2\},
\end{align*}
and $\{\mathbf{e_1},\mathbf{e_2}\} $ is the canonical basis of $\mathbb{R}^2$. As regards the second discrete space, we set
\begin{equation*}
    \hat{X}^2_h = span\{\hat{D}_{i_1,p_1-1}(\xi_1) \hat{D}_{i_2,p_2-1}(\xi_2):\ 1\leq i_l \leq n_l-1,\ l=1,2 \}.
\end{equation*}
The discrete spaces on the physical domain $\Omega$ can be defined from the spaces $\hat{X}^1_h$ and $\hat{X}^2_h$ by push-forward,
that is, the inverse of the transformations $\iota^1$ and $\iota^2$, that commute with the divergence operator.
We have the following definitions:
\begin{align*}
    X^1_h \coloneqq \{\mathbf{f_h} : \iota^1(\mathbf{f}_h) \in \hat{X}^1_h \},\\
    X^2_h \coloneqq \{f_h : \iota^2(f_h) \in \hat{X}^2_h \}.
\end{align*}
For later use, we introduce a suitable notation for the basis of these discrete spaces. 
We denote by $\{ \mathbf{b}_{i,h} \}_{i = 1 }^{N}$ and $\{ \hat{\mathbf{b}}_{i,h} \}_{i = 1 }^{{N}}$ the sets of basis functions of 
$X^1_h$ and $\hat{X}^1_h$ respectively, reordered with lexicographic ordering, such that 
$\iota^1 (\mathbf{b}_{i,h}) = \hat{\mathbf{b}}_{i,h}$ for any $i = 1,\dots,{N},$ where $N = n_1(n_2-1) + (n_1 -1) n_2$. 
Analogously we can introduce the notations $\{b_{i,h}\}_{i = 1}^{M}$ and $\{\hat{b}_{i,h}\}_{i = 1}^{M}$ for the basis functions of the 
spaces $X^2_h$ and $ \hat{X}^2_h$ respectively, such that, for $i = 1,\dots, M$ with $M= (n_1 -1 )(n_2 - 1) $ we have 
$\iota^2(b_{i,h}) = \hat{b}_{i,h}$.

\subsubsection{Univariate projections}
\label{sec:sub_projections_in_the_discrete_spaces}
In order to discretize equations \eqref{eqn:2.7} and \eqref{eqn:2.8}, 
We follow the idea in \cite{holderied2021mhd} and in \cite{kushova2020master}, 
and approximate the operator $\mathcal{T}$ with a projector 
into the discrete spline space. We first introduce commutative projectors 
in the univariate case, which can be defined by a dual basis, 
i.e. $\hat{\pi}_{p,\Xi} : L^2(0,1) \longrightarrow \hat{S}_{p}(\Xi)$ such that
\begin{equation}
\label{eqn:3.2}
    \hat{\pi}_{p,\Xi}(f) = \sum_{i=1}^n \lambda_{i,p}(f) \hat{B}_{i,p}
\end{equation}
where $\lambda_{i,p}$ are a set of dual functionals verifying $\lambda_{i,p}(\hat{B}_{j,p}) = \delta_{ij}$, where $\delta_{ij}$ is the 
Kronecker delta. Notice that, with this property, the operator is a projection, that is 
$\hat{\pi}_{p,\Xi}(s) = s$, for all $s \in \hat{S}_{p}(\Xi) $.

We also recall the construction of univariate projections that 
commute with the derivative as in \cite[Section 3.1.2]{buffa2011isogeometric}.
In order to obtain the one-dimensional commuting diagram, we want the 
commutative projection $\hat{\pi}^c_{p-1,\Xi'}:L^2(0,1) \to S_{p-1}(\Xi') $ to 
satisfy:
\begin{equation*}
    \hat{\pi}_{p-1,\Xi'}^c \frac{\partial}{\partial \xi} f =  \frac{\partial}{\partial \xi} \hat{\pi}_{p,\Xi} f, 
\end{equation*}
for all $f$ in $H^1(0,1)$. In order to satisfy the previous equation, given 
$\hat{\pi}_{p,\Xi}$ as in \eqref{eqn:3.2}, its commutative projection is defined as
\begin{equation}
\label{eqn:3.4}
    \hat{\pi}_{p-1,\Xi'}^c g (\xi) \coloneqq \frac{\partial}{\partial \xi} \hat{\pi}_{p,\Xi} \int_0^{\xi} g(s) \mathrm{d}s, 
\end{equation}
for all functions $g$ such that $G(\xi) = \int_0^{\xi}g(s) \mathrm{d}s$ is in the domain of $\hat{\pi}_{p,\Xi}$.

To apply periodic boundary conditions we need to 
define the commutative projector in a different way, to ensure that
integrating a periodic function in $L^2(0,1)$, 
we obtain a periodic function in $H^1(0,1)$. Hence, given 
a periodic projection $\hat{\pi}_{p,\Xi}^{Per}$, we define its 
commutative one as follows 
\begin{equation}
\label{eqn:3.5}
    \hat{\pi}_{p-1, \Xi'}^{c, Per} g (\xi) \coloneqq 
    \bigg(\frac{\partial}{\partial \xi} \hat{\pi}_{p,\Xi}^{Per} \int_0^{\xi} \tilde{g}(s) \mathrm{d}s \bigg) \oplus \overline{g},
\end{equation}
for all functions $g = \tilde{g} \oplus \overline{g} \in L^2(0,1)$ such that $ \overline{g} \coloneqq \int_0^1 g(s) \mathrm{d}s \in \mathbb{R}$ and $\tilde{g} \in L^2_0(0,1)$, the space of functions in $L^2(0,1)$ with zero average. Notice that 
$G(\xi) = \int_0^{\xi}{ \tilde{g}}(s) \mathrm{d}s$ is in the domain of $\hat{\pi}_{p,\Xi}^{Per}$, and again we have
\begin{equation}
    \hat{\pi}_{p-1,\Xi'}^{c, Per} \frac{\partial}{\partial \xi} f =  \frac{\partial}{\partial \xi} \hat{\pi}_{p, \Xi}^{Per} f ,
\end{equation}
for all $f$ in the domain of definition of $\hat{\pi}_{p, \Xi}^{Per}$. It is easy to see that the commutative projections defined in this way
preserve splines.

\subsubsection{Multivariate construction}
The univariate projection operators introduced can be extended to the multidimensional case by tensor 
product constructions. Given $\hat{\Omega} = [0,1]^d \subset \mathbb{R}^d$, 
for $i = 1,2,\dots,d $, let us denote with $\hat{\pi}_{p_i,\Xi_i}$ 
a generic univariate projection as in \eqref{eqn:3.2}. We can 
define a multivariate projection as
\begin{equation}
    \hat{\Pi}_{\mathbf{p,\Xi}} \coloneqq \hat{\pi}_{p_1,\Xi_1} \otimes \hat{\pi}_{p_2,\Xi_2} \otimes \dots \otimes \hat{\pi}_{p_d,\Xi_d}.
\end{equation}
It is important to note that it can be expressed as 
\begin{equation}
    \hat{\Pi}_{\mathbf{p},\mathbf{\Xi}}(f) = \sum_{\mathbf{i}\in \mathbf{J}} \lambda_{\mathbf{i,p}}(f) \hat{B}_{\mathbf{i,p}},
\end{equation}
where $\mathbf{p}$, $\mathbf{\Xi}$ and $\mathbf{J}$ are given as in the multivariate B-spline construction, and each dual functional 
is defined from the univariate dual basis by the expression
\begin{equation}
    \lambda_{\mathbf{i,p}} = \lambda_{i_1,p_1} \otimes \lambda_{i_2,p_2} \otimes \dots \otimes \lambda_{i_d,p_d}.
\end{equation}
It is easy to see that the same constructions hold for the multivariate periodic case, or for combinations with periodic 
boundary conditions in only one parametric coordinate.

At this point we have all the ingredients to define projections on the spaces $\hat{X}^1_h $ and $\hat{X}^2_h $. The choice of the 
projection follows from the definition of the discrete spaces. More precisely we set 
\begin{align}
    \label{eqn:4.3}
    \hat{\Pi}^1 &= (\hat{\pi}_{p_1} \otimes \hat{\pi}_{p_2-1}^c) \times( \hat{\pi}_{p_1-1}^c\otimes \hat{\pi}_{p_2}),\\
    \label{eqn:4.4}
    \hat{\Pi}^2 &= \hat{\pi}_{p_1-1}^c \otimes \hat{\pi}_{p_2-1}^c.
\end{align}
where for simplicity of notation we have omitted the knot vector from the subscript of the univariate projectors.
These projectors satisfy the spline preserving properties, see \cite[Lemma 5.3]{da2014mathematical}.

The projectors into the discrete spline spaces in the physical domain are defined from 
the ones in the parametric domain \eqref{eqn:4.3} and \eqref{eqn:4.4}, and the corresponding pull-backs $\iota^1$ and $\iota^2$, 
in such a way they are uniquely characterized by the equations
\begin{equation}
    \label{eqn:4.6new}
    \begin{aligned}
        \iota^1(\Pi^1 \mathbf{f}) = \hat{\Pi}^1(\iota^1(\mathbf{f})),\\
        \iota^2(\Pi^2 f) = \hat{\Pi}^2(\iota^2(f)).
    \end{aligned}
\end{equation}
These projectors satisfy the commutativity property with the divergence operator, 
as stated in the following lemma:
\begin{lem}\label{lemma:commutativity of projections}
Given $\hat{c} $ and $c $ the coefficients of spaces $\hat{X}^1$ and $X^1$, the following equations hold
\begin{align}
\label{eqn:4.5}
    \textup{div }( \hat{\Pi}^{1}(\hat{c}\textbf{f})) &= \hat{\Pi}^2 ( \textup{div } (\hat{c}\textbf{f})), 
     \quad \forall \textbf{f} \in \hat{X}^1,\\
\label{eqn:4.8}
    \textup{div }( \Pi^{1}(c\textbf{f})) &= \Pi^2 ( \textup{div }(c \textbf{f})),\quad \forall \textbf{f} \in X^1.
\end{align}
\end{lem}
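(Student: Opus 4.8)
The plan is to establish the parametric identity \eqref{eqn:4.5} by a direct tensor-product computation, and then to transfer it to the physical identity \eqref{eqn:4.8} through the pull-backs $\iota^1,\iota^2$, exploiting that $\iota^2$ is a bijection. The only two analytic inputs I need are the univariate commutation relation satisfied by the commuting projectors (both in the form \eqref{eqn:3.4} and in the periodic form \eqref{eqn:3.5}) and the divergence-preserving property $\text{div}\circ\iota^1=\iota^2\circ\text{div}$ recalled just after the definition of the pull-backs.

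For \eqref{eqn:4.5}, I would set $\mathbf{g}=\hat c\,\mathbf{f}=(g_1,g_2)$ and combine $\text{div}\,\mathbf{g}=\partial_{\xi_1}g_1+\partial_{\xi_2}g_2$ with the tensor-product definitions \eqref{eqn:4.3}--\eqref{eqn:4.4} to write
\begin{equation*}
\text{div}(\hat\Pi^1\mathbf{g})=\partial_{\xi_1}\big[(\hat\pi_{p_1}\otimes\hat\pi^c_{p_2-1})g_1\big]+\partial_{\xi_2}\big[(\hat\pi^c_{p_1-1}\otimes\hat\pi_{p_2})g_2\big].
\end{equation*}
The crucial step is to push each derivative through the tensor factor acting on the \emph{same} variable, using the univariate commutation $\partial_\xi\hat\pi_{p,\Xi}=\hat\pi^c_{p-1,\Xi'}\partial_\xi$ built into \eqref{eqn:3.4} (and its periodic counterpart from \eqref{eqn:3.5} in the periodic direction); the factor acting on the other variable is unaffected, since the partial derivative commutes with it. This turns the first summand into $(\hat\pi^c_{p_1-1}\otimes\hat\pi^c_{p_2-1})\partial_{\xi_1}g_1$ and the second into $(\hat\pi^c_{p_1-1}\otimes\hat\pi^c_{p_2-1})\partial_{\xi_2}g_2$. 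Factoring out the common operator $\hat\pi^c_{p_1-1}\otimes\hat\pi^c_{p_2-1}=\hat\Pi^2$ and recombining the two derivatives into a divergence yields $\hat\Pi^2(\text{div}\,\mathbf{g})$, which is exactly \eqref{eqn:4.5}.

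For \eqref{eqn:4.8}, I would first record the elementary identity $\iota^1(c\mathbf{f})=\hat c\,\iota^1(\mathbf{f})$, which holds because $\hat c=c\circ\mathcal F$ is a scalar field and therefore commutes with the matrix weight $\det(D\mathcal F)(D\mathcal F)^{-1}$ defining $\iota^1$. Applying $\iota^2$ to the left-hand side of \eqref{eqn:4.8} and using in turn the divergence-preserving property, the characterization \eqref{eqn:4.6new}, the identity just recorded, the parametric result \eqref{eqn:4.5} with $\iota^1(\mathbf{f})\in\hat X^1$, and then \eqref{eqn:4.6new} once more, I obtain
\begin{equation*}
\iota^2\big(\text{div}(\Pi^1(c\mathbf{f}))\big)=\text{div}\big(\hat\Pi^1(\hat c\,\iota^1(\mathbf{f}))\big)=\hat\Pi^2\big(\text{div}(\hat c\,\iota^1(\mathbf{f}))\big)=\hat\Pi^2\big(\iota^2(\text{div}(c\mathbf{f}))\big)=\iota^2\big(\Pi^2(\text{div}(c\mathbf{f}))\big).
\end{equation*}
Since $\mathcal F$ is a diffeomorphism by Assumption \ref{assumption_1}, $\iota^2$ is injective, and cancelling it gives \eqref{eqn:4.8}.

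The main obstacle I anticipate is not conceptual but a matter of careful bookkeeping in the parametric step: one must apply the univariate commutation on the correct factor of each tensor product (the $\xi_1$-derivative acts only on the first factor, the $\xi_2$-derivative only on the second), and in the periodic direction one must invoke the periodic commuting projector \eqref{eqn:3.5} rather than \eqref{eqn:3.4}, verifying that the antiderivatives involved remain in the respective domains so that the commutation is licit. Everything in the physical step is then a formal manipulation of the pull-backs, relying only on the smoothness of $\mathcal F$ and $\mathcal F^{-1}$.
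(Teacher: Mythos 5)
Your proposal is correct and takes essentially the same route as the paper: the paper disposes of \eqref{eqn:4.5} by citing \cite[Lemma 5.5]{da2014mathematical}, whose proof is precisely the tensor-product computation you spell out (pushing each partial derivative through the matching univariate factor via the commutation built into \eqref{eqn:3.4} and \eqref{eqn:3.5}, then factoring out $\hat{\Pi}^2$), and it declares \eqref{eqn:4.8} an ``immediate consequence of the definitions together with \eqref{eqn:4.5}'', which is exactly your pull-back chain using $\iota^1(c\mathbf{f})=\hat{c}\,\iota^1(\mathbf{f})$, the property $\textup{div}\circ\iota^1=\iota^2\circ\textup{div}$, the characterization \eqref{eqn:4.6new}, and the injectivity of $\iota^2$. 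You have simply made explicit the two steps the paper delegates to a citation and to ``the definitions''.
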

The proof of \eqref{eqn:4.5} is given in \cite[Lemma 5.5]{da2014mathematical}. Equation \eqref{eqn:4.8} is an immediate consequence 
of the definitions, together with the commutativity property \eqref{eqn:4.5}.
We conclude this paragraph with the following remark on the periodic case.
\begin{rmk}\label{rem:mixed_projections}
If $\Gamma_P \neq \emptyset$, we proceed analogously introducing the discrete spaces
\begin{align}
    \hat{X}^1_h &\coloneqq \Big(\hat{S}_{p_1}(\Xi_1)\otimes\hat{S}_{p_2-1}^{Per}(\Xi_2')\Big) \times 
     \Big( \hat{S}_{p_1-1}(\Xi_1') \otimes \hat{S}_{p_2}^{Per}(\Xi_2)\Big),\\
    \hat{X}^2_h &\coloneqq \hat{S}_{p_1-1}(\Xi_1') \otimes \hat{S}_{p_2-1}^{Per}(\Xi_2'),
\end{align}
and considering the projectors:
\begin{align}
\label{eqn:4.13}
    \hat{\Pi}^1 &= (\hat{\pi}_{p_1} \otimes \hat{\pi}_{p_2-1}^{c,Per}) \times( \hat{\pi}_{p_1-1}^c\otimes \hat{\pi}_{p_2}^{Per}),\\
    \hat{\Pi}^2 &= \hat{\pi}_{p_1-1}^c \otimes \hat{\pi}_{p_2-1}^{c,Per}.
\end{align}
\end{rmk}

\subsubsection{Semi-discretization in space}  \label{sec:semi-discrete_space}
We propose to take as a linear operator $\mathcal{T}$ in \eqref{eqn:2.7} and \eqref{eqn:2.8} the generic tensor product 
commutative projection $\Pi^2$. The semi-discrete problem in space reads, find $\mathbf{v}_h \in X^1_h$ and $\phi_h \in X^2_h$,
such that the following equations hold: 
\begin{subequations}
    \label{eqn:semi_disc_in_spazio}
    \begin{align}
        \label{eqn:semi_disc_in_spazio_a}
        \int_\Omega \left(\mathbf{v}_h\right)_t\cdot \mathbf{w}_h\ \mathrm{d}\mathbf{x} &=
         -\int_\Omega \Pi^2(\text{div}(c\mathbf{w}_h))\  \phi_h\ \mathrm{d}\mathbf{x}, \quad &\forall \mathbf{w}_h \in {X}^1_h, \\  
        \label{eqn:semi_disc_in_spazio_b}
        \left(\phi_h\right)_t &= \Pi^2(\text{div}(c\mathbf{v}_h)).            
    \end{align}
\end{subequations}
In next section we will exploit a particularization of such discretization, based on quasi-interpolant projections. 

\subsection{Quasi interpolant projections}\label{sec:sub_quasi_interpolant_projections}
\pgfplotsset{every axis/.append style={thick},compat=1.5}

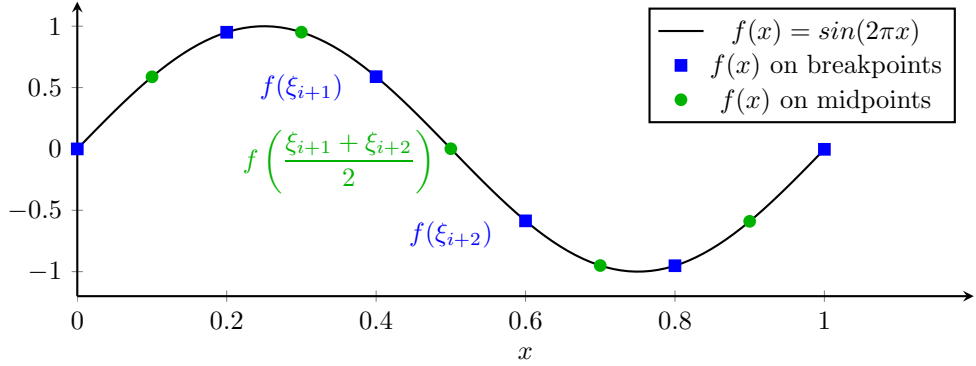
\begin{figure}
    \centering
    \begin{tikzpicture}
        \begin{axis}[
            axis lines=left,
            xlabel = \(x\),
            width=13.5cm,
            height=5.5cm,
            xmin=0.0,xmax=1.2,
            ymin=-1.2,ymax=1.2,
            ytick={-1,-0.5,0,0.5,1.0},
            xtick={0,0.2,0.4,0.6,0.8,1.0},
           legend entries = {$f(x)=sin(2\pi x)$,$f(x)$ on breakpoints ,$f(x)$ on midpoints}
        ]
        \addplot [domain=0:1, samples=100, color=black,]
            {sin(deg( 2*3.14*x)) };
        \addplot [only marks, domain=0:1, samples=6, color=blue, mark = square*] 
            {sin(deg( 2*3.14*x))};
        \addplot [only marks, domain=0.1:0.9, samples=5, color=green!70!black, mark = *]
            {sin(deg(2*3.14*x))};
        \node[blue] at (30,170) {$f(\xi_{i+1})$};
        \node[green!70!black] at (35,110) {$f\left(\dfrac{\xi_{i+1}+\xi_{i+2}}{2}\right)$};
        \node[blue] at (50,50) {$f(\xi_{i+2})$};
        \end{axis}
    \end{tikzpicture}
    \caption{Example of pointwise evaluation of $f(x) = sin(2\pi x)$ over breakpoints and midpoints 
    for projection with $\hat{\pi}_{2,\Xi}$ where $\Xi = \{0,0,0,0.2,0.4,0.6,0.8,1,1,1 \}$. 
    The three highlighted points are used for the computation of 
    $\lambda_{i,2}(f)$, for $i = 4$, corresponding to the B-spline with support $[0.2 , 0.8]$. }
    \label{fig:new_plot_1}
\end{figure}
Here we focus on a particular projection whose construction is given in detail in \cite{lee2000some}. We 
refer to such projection using the notation $\llm[p]$. We deal with open knot vectors with non-repeating 
internal knots, i.e. $\Xi = \{\xi_1, \dots, \xi_{n+p+1}\}$ such that the vector 
$\mathbf{Z} = \{\zeta_1, \dots, \zeta_{z}\}$ of the breakpoints has multiplicities 
$m_1 = m_z = p+1$ and $m_j = 1$ for $j = 2,\dots,z-1$. 
In order to find an approximation $\llm[p](f)$ of $f:[0,1]\to \mathbb{R}$, we need to define a dual 
basis $\lambda_{i,p}$ and compute $\lambda_{i,p}(f)$ for $i=1,\dots,n$, and the idea is the following:
\begin{enumerate}
    \item Choose $\mathcal{I} = [\xi_{\mu},\xi_{\nu}]\subset[\xi_{p+1},\xi_n]$ such that 
            $\mathcal{I} \cap [\xi_{i},\xi_{i+p+1}] \neq \emptyset$. Notice that $[\xi_{i},\xi_{i+p+1}]$ 
            is the support of $\hat{B}_{i,p}$, and denote by $f_{\mathcal{I}}$ the restriction of $f$ to $\mathcal{I}$.
    \item Choose a local approximation method $\mathcal{P}_{loc}$ for $f_{\mathcal{I}}$, such that it reproduces 
            splines of degree up to $p$. This approximation is written analogously to \eqref{eqn:3.2} in this way: 
            $\mathcal{P}_{loc}(f_{\mathcal{I}}) = \sum_{j = \mu-p}^{\nu-1} c_j \hat{B}_{j,p}$, and we have 
            $\mu-p\leq i \leq \nu-1$ since the support of $\hat{B}_{i,p}$ intersects $\mathcal{I}$.
    \item Finally set $\lambda_{i,p}(f) = c_i$.
\end{enumerate}
The idea is to use polynomial interpolation as local approximation method. We exploit the construction for spline 
spaces of degree $2$ and $3$ given in \cite{lee2000some}.
Let us fix now $p = 2$, for a given index $i = 1,\dots,n$, we have 
$[\xi_{i+1},\xi_{i+2}]\subset [\xi_{i}, \xi_{i+3}] = supp(\hat{B}_{i,2}) $, but 
$\xi_{i+1}<\xi_{i+2}$ when $i\neq 1,n$. This means that for $i = 2, \dots, n-1$
we can choose $\mathcal{I} = [\xi_{i+1}, \xi_{i+2}]$, and perform three points interpolation on 
$\xi_{i+1}< (\xi_{i+1}+\xi_{i+2})/2<\xi_{i+2}$. Notice that we are interpolating over two 
consecutive breakpoints and their midpoint in a uniform partition. Due to this particular 
choice of the interpolation points, we can explicitly write the coefficients of $\llm[2]$, that are:
\begin{equation}
    \label{eqn:degree2}
        \lambda_{i,2}(f) = -\frac{1}{2}f(\xi_{i+1}) + 2 f\Big(\frac{\xi_{i+1} + \xi_{i+2}}{2}\Big) -\frac{1}{2}f(\xi_{i+2}),\quad \text{for } i = 2,\dots,n-1.
\end{equation}
We remark this expression is valid whenever $\xi_{i+1} < \xi_{i+2}$,
which is not the case for $i = 1$ and $i = n$ in the case 
of an open knot vector, as used for Dirichlet boundary 
conditions. In this situation we want the operator to be 
interpolant at the boundary and hence we fix 
$\lambda_{1,2} = f(0) $ and $\lambda_{n,2} = f(1)$. As 
regards closed knot vectors for periodic boundary 
conditions, the computation is easier, since we can use 
\eqref{eqn:degree2} for all indices $i = 1,\dots,n$. Notice 
that we need the pointwise evaluation of the function $f$ 
we want to approximate, among the breakpoints and their 
midpoints, as in the example given in Figure \ref{fig:new_plot_1}.

We recall now the explicit formulae in the case $p=3$, that is the quasi-interpolant $\llm[3]$, and refer for details to \cite{lee2000some}. 
Here, as local approximation method, we use five points interpolation over the knots and midpoints of 
$\mathcal{I} = [\xi_{i+1}, \xi_{i+3}]$. Instead of \eqref{eqn:degree2}, we can write now
\begin{equation}
\label{eqn:degree3}
    \lambda_{i,3}(f) = \frac{1}{6}f(\xi_{i+1}) -\frac{4}{3}f\Big(\frac{\xi_{i+1} + \xi_{i+2}}{2}\Big) +\frac{10}{3}f(\xi_{i+2}) -\frac{4}{3}f\Big(\frac{\xi_{i+2} + \xi_{i+3}}{2}\Big) +\frac{1}{6}f(\eta_{i+3}).
\end{equation}
Again for open knot vectors, hence Dirichlet boundary conditions, we have the particular cases for 
$i = 1,2,n-1,n,$ that we recall here:
\begin{equation*}
    \begin{aligned}
    \lambda_{1,3}(f)  &= f(0),\\
    \lambda_{2,3}(f)  &= -\frac{5}{18}f(\xi_{4}) +\frac{20}{9}f\Big(\frac{\xi_{4} + \xi_{5}}{2}\Big) -\frac{4}{3}f(\xi_{5}) +\frac{4}{9}f\Big(\frac{\xi_{5} + \xi_{6}}{2}\Big) -\frac{1}{18}f(\xi_{6}),\\
    \lambda_{n-1,3}(f)&= -\frac{1}{18}f(\xi_{n-1}) +\frac{4}{9}f\Big(\frac{\xi_{n-1} + \xi_{n}}{2}\Big) -\frac{4}{3}f(\xi_{n}) +\\
                      &\quad +\frac{20}{9}f\Big(\frac{\xi_{n} + \xi_{n+1}}{2}\Big) -\frac{5}{18}f(\xi_{n+1}),\\
    \lambda_{n,3}(f)  &= f(1),
\end{aligned}
\end{equation*}
From the computational point of view, we remember this technique 
requires evaluation of $f$ over the breakpoints and midpoints. 
Notice that for projections in periodic spline spaces, it is sufficient
to use \eqref{eqn:degree2} or \eqref{eqn:degree3}.
Finally, for approximation estimates by quasi-interpolant projections based on 
point evaluation of functions, we refer to \cite[Section 5]{lyche1975local}.

\subsubsection{Commutative quasi-interpolant projections}
\label{sec:sub_sub_commutative_projections} 
In order to compute a projection $\llm[p]^c$ that commutes with $\llm[p]$, we 
follow the construction in \eqref{eqn:3.4}. Since we have to apply $\llm[p]$ to the integral function $F$, we need its 
evaluation over all the breakpoints and midpoints. For this reason, 
instead of using Gaussian quadrature, we prefer to use Cavalieri-Simpson composite 
quadrature formulae. We want to consider both breakpoints and 
midpoints in a unified notation. Hence we introduce 
$\boldsymbol{\eta} = \{\eta_1, \dots, \eta_{2z-1}\} $ such that:
\begin{equation*}
    \begin{cases}
        \eta_{2i-1} = \zeta_{i}, \quad &\text{for } i = 1, \dots, z,\\
        \eta_{2i} = \frac{1}{2}(\zeta_{i}+\zeta_{i+1}), \quad &\text{for } i = 1,\dots, z-1.
    \end{cases} 
\end{equation*} 

With this notation we have $F(\eta_1)=0$ and for $i = 2,\dots,2z-1$:
\begin{align*}
    F(\eta_i) &= \int_{0}^{\eta_i} f(s) \mathrm{d}s= \sum_{j = 1}^{i-1} \int_{\eta_j}^{\eta_{j+1}} f(s)\mathrm{d}s \\
              &\approx \sum_{j = 1}^{i-1} \frac{h}{6} \left( f(\eta_j) + 4f\Big(\frac{\eta_j + \eta_{j+1}}{2}\Big) + f(\eta_{j+1})\right),
\end{align*}
where $h = \eta_{j+1}-\eta_{j}$. Notice that this quadrature 
formula is exact for polynomials up to degree $3$, but it requires evaluation 
of the integrating function over further midpoints, i.e. 
$(\eta_j + \eta_{j+1})/2$, as it is shown in Figure \ref{fig:new_plot_2}. 
\begin{figure}
    \centering
    \begin{tikzpicture}
        \begin{axis}[
            axis lines=left,
            xlabel = \(x\),
            width=13.5cm,
            height=6cm,
            xmin=0.0,xmax=1.2,
            ymin=-1.2,ymax=1.2,
            ytick={-1,-0.5,0,0.5,1.0},
            xtick={0,0.2,0.4,0.6,0.8,1.0},
            legend entries = {$f(x)=sin(2\pi x)$,$f(x)$ on breakpoints, $f(x)$ on midpoints, on further midpoints}
            ]
        \addplot [domain=0:1, samples=100, color=black,]
            {sin(deg( 2*3.14*x)) };
        \addplot [only marks, domain=0:1, samples=6, color=blue, mark = square*] 
            {sin(deg( 2*3.14*x))};
        \addplot [only marks, domain=0.1:0.9, samples=5, color=green!60!black, mark = *]
            {sin(deg(2*3.14*x))};
        \addplot [only marks, domain=0.05:0.95, samples=10, color=red, mark = star]
            {sin(deg(2*3.14*x))};
        \node[blue] at (35,170) {$f(\eta_{j})$};
        \node[green!60!black] at (43,110) {$f(\eta_{j+1})$};
        \node[blue] at (53,50) {$f(\eta_{j+2})$};
        \node[red] at (59,160) {$f\left(\dfrac{\eta_{j} + \eta_{j+1}}{2}\right)$};
        \node[red] at (71,100) {$f\left(\dfrac{\eta_{j+1} + \eta_{j+2}}{2}\right)$};
        \end{axis}
    \end{tikzpicture}
    \caption{Example of evaluation of $f(x) = sin(2\pi x)$ in further midpoints, in order to project with $\llm[p]^c$}
    \label{fig:new_plot_2}
\end{figure}
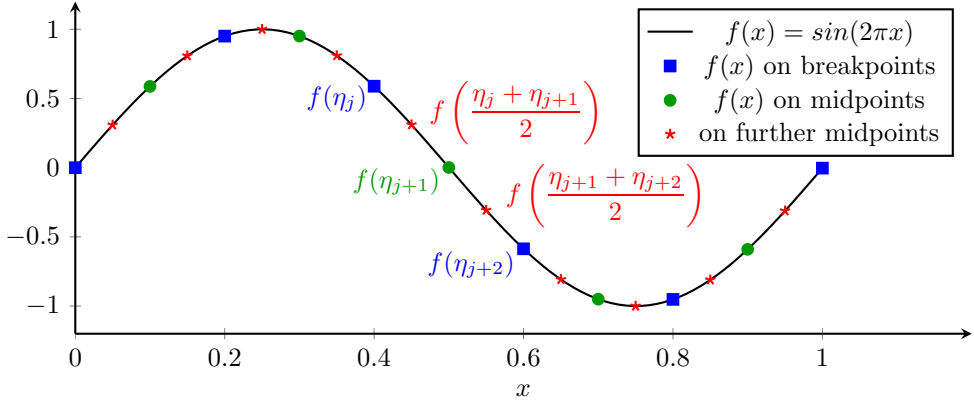

\subsubsection{Multivariate quasi-interpolant projections}
\label{sec:sub_sub_multivariate_commutative_projections} 
Finally we define the multivariate quasi-interpolant projections that commute with the 
divergence operator in the parametric domain, as 
\begin{equation}\label{eqn:multivariate_llm_projections}
    \begin{aligned}
        \LLM[1] &= (\llm[p_1] \otimes \llm[p_2-1]^c) \times( \llm[p_1-1]^c\otimes \llm[p_2]),\\
        \LLM[2] &= \llm[p_1-1]^c \otimes \llm[p_2-1]^c.    
    \end{aligned}
\end{equation}
The projections in the physical domain, namely $\pLLM[1],\pLLM[2]$, are uniquely determined by  
\eqref{eqn:4.6new}. We remark that projections $\LLM[1],\LLM[2],\pLLM[1],\pLLM[2]$ satisfy \cref{lemma:commutativity of projections}.
An analogous constructions holds for projections in periodic spline spaces, or in 
spline spaces with mixed Dirichlet and periodic conditions, as in \cref{rem:mixed_projections}. 


\subsection{Conservative time discretization}
\label{sec:sub_conservative_semi_discretization_in_time}
Regarding time discretization, the choice of an energy-preserving method is mandatory since we are interested in preserving the total 
energy of the system. We detail the discretization with Crank-Nicolson method, which is second order and energy conservative, but other conservative methods could be chosen.

Let us fix $\tau = (t_0,\dots,t_{N_T})$, a partition of the interval $[0,T]$, such that $t_0=0$, $t_{N_T}=T$ and $t_n < t_{n+1}$.
For a simplified notation we assume the partition to be uniform, that is $t_{n+1}-t_n= k$, with a fixed real number $k>0$, 
for $ n= 0, \dots , N_T-1$. Let us denote by $\mathbf{v}_h^n$ and $\phi_h^n$ respectively the value of $\mathbf{v}_h $ and $\phi_h$ 
at the time instant $t_n$. Applying Crank-Nicolson method to equations \eqref{eqn:semi_disc_in_spazio},
the fully discrete problem is now, for $ n = 0, \dots ,N_T-1 $ find $\mathbf{v}_h^{n+1} \in X^1_h$ and $\phi_h^{n+1} \in X^2_h$, 
such that
\begin{subequations}
\label{eqn:5.6}
\begin{align}
    \int_{\Omega} \frac{\mathbf{v}_h^{n+1}-\mathbf{v}_h^n}{k}\cdot\mathbf{w}_h \mathrm{d}\mathbf{x} &=
     -\frac{1}{2}\int_{\Omega} \Pi^{2} (\text{div}(c\mathbf{w}_h))(\phi_h^{n+1} + \phi_h^n)\mathrm{d}\mathbf{x},
      \quad \forall \mathbf{w}_h \in X^1_h,\label{eqn:4.11a}\\
    \frac{\phi_h^{n+1} - \phi_h^n}{k} &= + \frac{1}{2}\big( \Pi^{2} (\text{div}(c\mathbf{v}_h^{n+1}))+ \Pi^{2} (\text{div}( c\mathbf{v}_h^n))\big) 
    \label{eqn:4.11b}.
\end{align}
\end{subequations}
Notice that in equation \eqref{eqn:4.11b}, the left hand side $\phi_h^{n+1}$ is written as a linear combination of terms that depend 
on the time instant $t_n$, except for $\mathbf{v}_h^{n+1}$. Moreover, in view of the definition of the operator $\Pi^2$, 
\eqref{eqn:4.11b} can be seen as a collocation of the equation \eqref{eqn:2.8}. To solve the system, 
we replace the expression of $\phi_h^{n+1}$ from the second equation into the first one, and bring to the left all the terms in which the unknown 
$\mathbf{v}_h^{n+1}$ appears, that is
\begin{multline}
\label{eqn:5.8}
    \int_{\Omega}\mathbf{v}_h^{n+1}\cdot \mathbf{w}_h\mathrm{d}\mathbf{x} + 
     \frac{k^2}{4}\int_{\Omega} \Pi^{2} (\text{div} (c\mathbf{v}_h^{n+1})) \Pi^{2}(\text{div}(c\mathbf{w}_h))\mathrm{d}\mathbf{x}  = \\
    =\int_{\Omega} \mathbf{v}_h^n\cdot \mathbf{w}_h\mathrm{d}\mathbf{x} - 
     \frac{k^2}{4}\int_{\Omega} \Pi^{2}(\text{div}(c\mathbf{v}_h^n))\Pi^{2}(\text{div}(c\mathbf{w}_h))\mathrm{d}\mathbf{x} - 
      k\int_{\Omega}\phi_h^n\Pi^{2}(\text{div}(c\mathbf{w}_h))\mathrm{d}\mathbf{x},  
\end{multline}
which must hold for all $\mathbf{w}_h \in X^1_h$. We solve this equation to compute the unknown $\mathbf{v}_h^{n+1}$, which is then used to update the solution $\phi_h^{n+1}$ as indicated by the second equation of \eqref{eqn:5.6}.

In our numerical tests, we will to compare the proposed method with the standard Galerkin formulation, that we present for completeness, and which is given by: find $\mathbf{v}_h \in X^1_h$ and $\phi_h \in X^2_h$, such that 
\begin{subequations}
\label{eqn:5.7}
\begin{align}
    \int_{\Omega} \frac{\mathbf{v}_h^{n+1}-\mathbf{v}_h^n}{k}\cdot\mathbf{w}_h \mathrm{d}\mathbf{x} &= 
     -\frac{1}{2}\int_{\Omega} \text{div}(c\mathbf{w}_h)(\phi_h^{n+1} + \phi_h^n)\mathrm{d}\mathbf{x}, 
      \quad \forall \mathbf{w}_h \in X^1_h, \label{eqn:5.7a}\\
    \int_{\Omega} \frac{\phi_h^{n+1}-\phi_h^n}{k}\psi_h \mathrm{d}\mathbf{x} &= 
     + \frac{1}{2}\int_{\Omega}\big( \text{div}(c\mathbf{v}_h^{n+1}) + \text{div}( c\mathbf{v}_h^n)\big)\psi_h\mathrm{d}\mathbf{x},
      \quad \forall \psi_h \in X^2_h, \label{eqn:5.7b}
\end{align}
\end{subequations}
\begin{rmk}\label{Remark:energy_conservation}
Notice that both equations \eqref{eqn:5.6} and \eqref{eqn:5.7} are energy conservative schemes by construction. On the other hand, 
\eqref{eqn:5.7} can not be reduced to a single equation. Thus, the method \eqref{eqn:5.6} is intrinsically cheaper than the method 
\eqref{eqn:5.7} and, at our knowledge, it is the first conservative method that does not require computation of both primal and dual 
variables at each time step. { REWRITE THIS SENTENCE? DISCUSS}
\end{rmk}


\section{Error convergence analysis}
\label{sec:error_analysis}
In this section we analyze the convergence of the proposed method. In Section~\ref{sec:sub_convergence_analysis_with_adjoint_operator}
we first present a convergence study in an abstract setting, 
for a generic family of projections as in Section~\ref{sec:semi-discrete_space}, 
assuming stability and approximation properties both for the projectors and 
for their adjoint operators. Unfortunately, the adjoints of 
the quasi-interpolants of Section~\ref{sec:sub_quasi_interpolant_projections} 
do not satisfy these assumptions. For this reason, we present in Section~\ref{sec:sub_error_analysis_with_relaxed_assumptions}
the analysis under some relaxed assumptions, for which only 
linear convergence could be proved.

It will be useful for the analysis to write the equations of the problem in weak form. Let us denote by 
$(\mathbf{v},\phi)$ the classical solution of the problem \cref{eqn:2.2}, while $(\mathbf{v}_h,\phi_h)$ is the solution of the 
semidiscrete problem in space \eqref{eqn:semi_disc_in_spazio}, which satisfies also the following variational equations:
\begin{subequations}
\label{eqn:5.1}
\begin{align}
    \label{eqn:5.1a}
    \int_\Omega \left(\mathbf{v}_h\right)_t\cdot \mathbf{w}_h\ \mathrm{d}\mathbf{x} + \int_\Omega \Pi^2(\text{div}(c\mathbf{w}_h))\  \phi_h\ \mathrm{d}\mathbf{x} &=0, \quad &\forall \mathbf{w}_h \in {X}^1_h, \\  
    \label{eqn:5.1b}
    \int_\Omega \left(\phi_h\right)_t \psi_h\ \mathrm{d}\mathbf{x}  -\int_\Omega \Pi^2(\text{div}(c\mathbf{v}_h))\ \psi_h\ \mathrm{d}\mathbf{x} &= 0, \quad &\forall \psi_h \in {X}^2_h.
\end{align}
\end{subequations}
We introduce the energy norm $\|\mathbf{v},\phi\|_{E}^2 \coloneqq \|\mathbf{v}\|_{\mathbf{L}^2(\Omega)}^2+\|\phi\|_{L^2(\Omega)}^2 $, the Sobolev norm
$\|\mathbf{v},\phi\|_{\mathcal{H}^s}^2 \coloneqq  \|\mathbf{v}\|_{\mathbf{H}^s(\text{div};\Omega)}^{2} + \|\phi\|_{H^s(\Omega)}^{2} $, for a positive integer $s\geq0$, 
and the following norms in the space time domain $\Omega \times [0,T]$
\begin{align*}
    \|\mathbf{v}, \phi \|_{\infty,E} &\coloneqq \sup_{t\in [0,T]} \|\mathbf{v},\phi\|_{E},\\
    \|\mathbf{v}, \phi \|_{W^{1,\infty},\mathcal{H}^s} &\coloneqq \max \{ \sup_{t\in [0,T]} \|\mathbf{v},\phi\|_{\mathcal{H}^s}, \sup_{t\in [0,T]} \|\mathbf{v}_t,\phi_t\|_{\mathcal{H}^s}\}.
\end{align*}
In order to alleviate notation, we indicate by $\|\phi,\psi\|_{E}^2 = \| \phi \|_{L^2(\Omega)}^2 + \|\psi\|_{L^2(\Omega)}^2 $ the equivalent of the energy norm when $\phi,\psi \in L^2({\Omega})$ are both scalar fields.  
We are interested in bounding 
\begin{equation}
\label{eqn:5.2}
    \|\mathbf{v}-\mathbf{v}_h,\phi-\phi_h\|_{\infty,E} = \sup_{t\in [0,T]} \|\mathbf{v}-\mathbf{v}_h,\phi-\phi_h\|_{E},
\end{equation}
and we will follow the ideas used in \cite{arnold2014mixed} for elasticity problems in mixed form with weak symmetry.

\subsection{Convergence analysis based on the adjoint projections}
\label{sec:sub_convergence_analysis_with_adjoint_operator}

{
Let us introduce the adjoint projection  $\Pi^{2,*} \in \textit{End}(X^2)$, which is defined by the following fundamental relation
\begin{equation}\label{eqn:def_adjoint_projection}
    \prescript{}{X^2}\langle \Pi^2(\phi), \psi \rangle_{X^{2,*}} = \prescript{}{X^2}\langle \phi , \Pi^{2,*} (\psi)\rangle_{X^{2,*}}, \quad
    \forall \ \phi \in X^2,\ \psi  \in X^{2,*}, 
\end{equation}
where $X^{2,*} $ is the dual space of $X^2$, and $\langle\cdot,\cdot\rangle$ denotes the duality pairing.
We identify $X^2 = L^2(\Omega)$ with its dual space, as it is usually done, and in particular \eqref{eqn:def_adjoint_projection} can be expressed in terms of 
the $L^2(\Omega)$ scalar product. Throughout this section, we require that $\Pi^2$ and $\Pi^{2,*}$ are $L^2$-stable, and $\Pi^1,\Pi^2$ and $\Pi^{2*}$ have good approximation properties, summarizing our request in the following assumption.  
\begin{ass}
\label{assumption_3}
The projections $\Pi^2$ and $\Pi^{2,*}$ are $L^2$-stable, that is 
\begin{align*}
    \|\Pi^2(\phi)\|_{L^2(\Omega)}         \leq \|\phi\|_{L^2(\Omega)},
    \quad \text{and} \quad
    \|\Pi^{2,*}(\psi)\|_{L^2(\Omega)}     \leq \|\psi\|_{L^2(\Omega)},
    \quad
    \forall \phi,\, \psi \in L^2(\Omega).
\end{align*}
Moreover, given $\mathbf{w} \in \mathbf{H}^m(\mathrm{div};\Omega)$ and $\psi\in H^m(\Omega)$, for $0\leq l \leq m \leq p$, and $p = \min_{i=1}^d\{p_i\}$, there exists a constant $C$ such that it holds: 
\begin{subequations}\label{eqn:5.3}
\begin{align}
    \label{eqn:5.3.a}
    \|(Id - \Pi^1)\mathbf{w}\|_{\mathbf{H}^l(\mathrm{div};\Omega)}  &\leq C h^{m-l} \| \mathbf{w}\|_{\mathbf{H}^m(\mathrm{div};\Omega)},\\ 
    \label{eqn:5.3.b}
    \|(Id - \Pi^2)\psi\|_{{H}^l(\Omega)}  &\leq C h^{m-l} \| \psi \|_{{H}^m(\Omega)},\\ 
    \label{eqn:5.3.b*}
    \|(Id - \Pi^{2,*})\psi\|_{{H}^l(\Omega)} &\leq C h^{m-l} \| \psi \|_{{H}^m(\Omega)},
\end{align}
\end{subequations}
where $Id$ is the identity operator, and $h$ is the mesh size. 
\end{ass}}
{  
Projections that commute with the divergence operator and satisfy $L^2$-stability, together with \eqref{eqn:5.3.a} and \eqref{eqn:5.3.b}, are well known in literature,  see \cite[Remark 5.1]{buffa2011isogeometric}.
The usefulness of this working hypothesis is thus to ensure \eqref{eqn:5.3.b*}.
One option is to explore quasi-interpolant operators constructed with biorthogonal dual bases, the latter having the capability of reproducing polynomials, see i.e. \cite{oswald2001polynomial}.
}

Let us assume from now on that the classical solution is sufficiently 
regular, i.e. $\mathbf{v}\in \mathbf{H}^m(\text{div};\Omega)$ and 
$\phi\in H^m(\Omega)$, for $0\leq m \leq p$ and $p$ as in Assumption 
\ref{assumption_3}. We can start bounding \eqref{eqn:5.2} by 
using the triangular inequality, and separate the error in 
the following way: 
\begin{equation}
\label{eqn:5.3.5}
     \|\mathbf{v}-\mathbf{v}_h,\phi-\phi_h\|_{\infty,E} \leq  \|\mathbf{v}-\mathbf{v}_h^P,\phi-\phi_h^P\|_{\infty,E} +  \|\mathbf{v}_h^P-\mathbf{v}_h,\phi_h^P-\phi_h\|_{\infty,E},
\end{equation}
where $\mathbf{v}_h^P$ and $\phi_h^P$ are suitable approximations of the fields $\mathbf{v}$ and $\phi$.
{ Here we fix $\mathbf{v}_h^P \coloneqq \Pi^1(\mathbf{v})$ and $\phi_h^P \coloneqq \Pi^2(\phi)$, 
but other approximation choices that satisfy \eqref{eqn:5.3.a} and \eqref{eqn:5.3.b} can also be considered.}
We recall the following approximation result.
\begin{lem}\label{lemma5.1}
There exists a constant $C>0$ such that, for $0\leq m \leq p$ and $p$ as in Assumption 
\ref{assumption_3}, it holds
\begin{subequations}
\label{eqn:5.4}
\begin{align}
\label{eqn:5.4a}
\|{\mathbf{v}-\mathbf{v}_h^P,\phi-\phi_h^P}\|_{\infty,E} &\leq Ch^m \|{\mathbf{v},\phi}\|_{\infty,\mathcal{H}^m},\\ \label{eqn:5.4b}
\|{\phi-\phi_h^P,\ \textup{div}(c\mathbf{v}-c\mathbf{v}_h^P)}\|_{\infty,E} &\leq Ch^{m} \|{\mathbf{v},\phi}\|_{\infty,\mathcal{H}^m},\\ \label{eqn:5.4c}
\|{(\mathbf{v}-\mathbf{v}_h^P)_t,(\phi-\phi_h^P)_t}\|_{\infty,E} &\leq Ch^m \|{\mathbf{v}_t,\phi_t}\|_{\infty,\mathcal{H}^m}.
\end{align}
\end{subequations}
\end{lem}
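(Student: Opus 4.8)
The plan is to reduce all three inequalities to the approximation bounds \eqref{eqn:5.3.a} and \eqref{eqn:5.3.b} of \cref{assumption_3}, taken at $l=0$. Since we have fixed $\mathbf{v}_h^P = \Pi^1(\mathbf{v})$ and $\phi_h^P = \Pi^2(\phi)$, I would first rewrite the errors as $\mathbf{v}-\mathbf{v}_h^P = (Id-\Pi^1)\mathbf{v}$ and $\phi-\phi_h^P = (Id-\Pi^2)\phi$. For \eqref{eqn:5.4a}, I then use the elementary domination $\|\mathbf{v}-\mathbf{v}_h^P\|_{\mathbf{L}^2(\Omega)} \leq \|(Id-\Pi^1)\mathbf{v}\|_{\mathbf{H}^0(\mathrm{div};\Omega)}$ together with \eqref{eqn:5.3.a}, and $\|\phi-\phi_h^P\|_{L^2(\Omega)} = \|(Id-\Pi^2)\phi\|_{H^0(\Omega)}$ together with \eqref{eqn:5.3.b}. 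Summing the squares of the two $h^m$ bounds reconstitutes the $\mathcal{H}^m$ norm on the right-hand side, and taking the supremum over $t\in[0,T]$ delivers \eqref{eqn:5.4a}.

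The estimate \eqref{eqn:5.4b} differs only in its second slot, which contains the divergence of the weighted error $c(\mathbf{v}-\mathbf{v}_h^P)$, and this is where I expect the one genuine subtlety. One cannot apply the commutativity of \cref{lemma:commutativity of projections} directly, because $c\,\mathbf{v}_h^P = c\,\Pi^1(\mathbf{v})$ is \emph{not} $\Pi^1(c\mathbf{v})$, so the weighted divergence does not collapse to $\Pi^2(\mathrm{div}(c\mathbf{v}))$. Instead I would expand $\mathrm{div}(c\mathbf{g}) = c\,\mathrm{div}(\mathbf{g}) + \nabla c\cdot\mathbf{g}$ with $\mathbf{g}=(Id-\Pi^1)\mathbf{v}$, and exploit that $c$ is smooth and uniformly bounded, hence $c\in W^{1,\infty}(\Omega)$, to obtain $\|\mathrm{div}(c\mathbf{g})\|_{L^2(\Omega)} \leq C_c\,\|\mathbf{g}\|_{\mathbf{H}^0(\mathrm{div};\Omega)}$ with $C_c$ depending only on $\|c\|_{W^{1,\infty}(\Omega)}$. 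Applying \eqref{eqn:5.3.a} at $l=0$ to $\mathbf{g}$ yields the $h^m$ rate for the divergence term, and combining it with the $\phi$-term already controlled above closes \eqref{eqn:5.4b}.

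Finally, \eqref{eqn:5.4c} follows by repeating the argument for \eqref{eqn:5.4a} with the time derivatives $\mathbf{v}_t$ and $\phi_t$ in place of $\mathbf{v}$ and $\phi$. The crucial observation is that $\Pi^1$ and $\Pi^2$ are built from $t$-independent pull-backs and dual functionals, so they commute with $\partial_t$, giving $(\mathbf{v}-\mathbf{v}_h^P)_t = (Id-\Pi^1)\mathbf{v}_t$ and $(\phi-\phi_h^P)_t = (Id-\Pi^2)\phi_t$; the estimate then reproduces the first step with $\|\mathbf{v}_t,\phi_t\|_{\infty,\mathcal{H}^m}$ on the right, which is finite by the assumed time-uniform regularity of the solution. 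In summary, only the variable-coefficient term in \eqref{eqn:5.4b} requires care, and it is handled by the product rule and the $W^{1,\infty}$-regularity of $c$ rather than by commutativity, while every other step is a direct consequence of \cref{assumption_3}.
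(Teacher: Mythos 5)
Your proposal is correct and takes essentially the same route as the paper's proof: \eqref{eqn:5.4a} and \eqref{eqn:5.4c} follow directly from \eqref{eqn:5.3.a}--\eqref{eqn:5.3.b} at $l=0$, and the divergence term in \eqref{eqn:5.4b} is handled exactly as in the paper, via the Leibniz expansion $\textup{div}(c\mathbf{g}) = c\,\textup{div}(\mathbf{g}) + \nabla c\cdot\mathbf{g}$ and the uniform boundedness of $c$ and $\nabla c$, rather than by commutativity. Your only addition is making explicit that the time-independent projectors commute with $\partial_t$, a point the paper leaves implicit when it calls the proof of \eqref{eqn:5.4c} ``analogous.''
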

\begin{proof}
    In order to bound \eqref{eqn:5.4a} we argue like this:
    \begin{equation*}
        \begin{aligned}
            \|{\mathbf{v}-\mathbf{v}_h^P,\phi-\phi_h^P}\|_{\infty,E}^2 
            & \eqdef \sup_{t\in [0,T]} \left( \|\mathbf{v}-\mathbf{v}_h^P\|_{\mathbf{L}^2(\Omega)}^2 + \|\phi-\phi_h^P\|_{L^2(\Omega)}^2 \right)\\
            & \leq \sup_{t\in [0,T]}  \left( Ch^{2m} \|\mathbf{v}\|_{\mathbf{H}^m(\text{div};\Omega)}^{2} +  Ch^{2m}\|\phi\|_{H^m(\Omega)}^2\right)\\
            & = Ch^{2m} \|{\mathbf{v},\phi}\|_{\infty, \mathcal{H}^m}^2,               
        \end{aligned}
    \end{equation*}
    where we used only \eqref{eqn:5.3.a} and \eqref{eqn:5.3.b}, and taking the square roots we proved \eqref{eqn:5.4a}.
    The proof of \eqref{eqn:5.4c} is analogous. As regards \eqref{eqn:5.4b} we have that
    \begin{equation*}
            \|{\phi-\phi_h^P, \textup{div}(c\mathbf{v}-c\mathbf{v}_h^P)}\|_{\infty,E}^2 
            = \sup_{t\in [0,T]} \left( \|\phi-\phi_h^P\|_{L^2(\Omega)}^2 + \|\textup{div}(c\mathbf{v}-c\mathbf{v}_h^P)\|_{L^2(\Omega)}^2\right).
    \end{equation*}
    By applying the Leibnitz rule and Young's inequality, $(a+b)^2\leq 2a^2+2b^2 $ for each $a,b>0$, on the last term of the right hand side we have 
    \begin{equation*}
        \begin{aligned}
            \|\textup{div}(c\mathbf{v}-c\mathbf{v}_h^P)\|_{L^2(\Omega)}^2 &= 
            \left( \|c\ \textup{div}(\mathbf{v}-\mathbf{v}_h^P)\|_{L^2(\Omega)} + \|\grad{c} \cdot (\mathbf{v}-\mathbf{v}_h^P)\|_{L^2(\Omega)}\right)^2 \\
            &\leq 2 \|c\ \textup{div}(\mathbf{v}-\mathbf{v}_h^P)\|_{L^2(\Omega)}^2 + 2 \|\grad{c} \cdot (\mathbf{v}-\mathbf{v}_h^P)\|_{L^2(\Omega)}^2\\
            & \leq  C \|\mathbf{v}-\mathbf{v}_h^P\|_{\mathbf{H}^0(\text{div};\Omega)}^{2},           
        \end{aligned}
    \end{equation*}
    because $c$ is smooth and uniformly bounded. Now again we can apply \eqref{eqn:5.3.a} and \eqref{eqn:5.3.b},
    which gives us
    \begin{equation*}
        \begin{aligned}
            \|{\phi-\phi_h^P, \textup{div}(c\mathbf{v}-c\mathbf{v}_h^P)}\|_{\infty,E}^2 
            & \leq C \sup_{t\in [0,T]}   \left( \|\phi-\phi_h^P \|_{L^2(\Omega)}^2 +  \|\mathbf{v}-\mathbf{v}_h^P\|_{\mathbf{H}^0(\text{div};\Omega)}^{2} \right)\\
            & \leq C \sup_{t\in [0,T]}  \left( h^{2m}  \|\phi \|_{H^m(\Omega)}^2 + h^{2m} \|\mathbf{v}\|_{\mathbf{H}^m(\text{div};\Omega)}^{2} \right)\\
            & \leq C h^{2m} \|{\mathbf{v},\phi}\|_{\infty, \mathcal{H}^m}^2.
        \end{aligned}
    \end{equation*}
    and again taking the square roots we end the proof of this lemma. 
\end{proof}
Next we state the main result of this paper.
\begin{thm}\label{Theorem5.2}
Under Assumption \ref{assumption_3}, together with the commutativity of the projectors, 
$\Pi^2 \textup{div} = \textup{div} \Pi^1$, given $\mathbf{v} \in \mathbf{H}^m(\mathrm{div};\Omega)$ and $\phi \in H^m(\Omega)$, 
it holds that
\begin{equation}
\label{tesi}
    \|\mathbf{v}-\mathbf{v}_h,\phi-\phi_h\|_{\infty,E} \leq C h^{m-1} \|\mathbf{v},\phi\|_{W^{1,\infty},\mathcal{H}^m}.
\end{equation}
\end{thm}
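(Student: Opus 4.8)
The plan is to bound the second term in the splitting \eqref{eqn:5.3.5}, since the first is already $Ch^m$ by \cref{lemma5.1}. Introduce the discrete error functions $\boldsymbol{\theta} \coloneqq \Pi^1(\mathbf{v}) - \mathbf{v}_h \in X^1_h$ and $\rho \coloneqq \Pi^2(\phi) - \phi_h \in X^2_h$, and choose the discrete initial data so that $\boldsymbol{\theta}(0)=0$ and $\rho(0)=0$. Subtracting the semidiscrete equations \eqref{eqn:5.1a}--\eqref{eqn:5.1b} from the continuous problem \eqref{eqn:2.2} tested against discrete functions, inserting $\Pi^1(\mathbf{v})$ and $\Pi^2(\phi)$, and using the strong equation $\phi_t=\textup{div}(c\mathbf{v})$ together with the linearity of $\Pi^2$, I obtain the error system
\begin{align*}
\int_\Omega \boldsymbol{\theta}_t\cdot\mathbf{w}_h\,\mathrm{d}\mathbf{x} &= \int_\Omega(\Pi^1\mathbf{v}_t - \mathbf{v}_t)\cdot\mathbf{w}_h\,\mathrm{d}\mathbf{x} - \int_\Omega\textup{div}(c\mathbf{w}_h)\,\phi\,\mathrm{d}\mathbf{x} + \int_\Omega\Pi^2(\textup{div}(c\mathbf{w}_h))\,\phi_h\,\mathrm{d}\mathbf{x},\\
\int_\Omega \rho_t\,\psi_h\,\mathrm{d}\mathbf{x} &= \int_\Omega\Pi^2(\textup{div}(c(\mathbf{v}-\mathbf{v}_h)))\,\psi_h\,\mathrm{d}\mathbf{x},
\end{align*}
valid for all $\mathbf{w}_h\in X^1_h$ and $\psi_h\in X^2_h$.

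The heart of the argument is the energy identity obtained by taking $\mathbf{w}_h=\boldsymbol{\theta}$, $\psi_h=\rho$ and adding. Writing $\phi_h=\Pi^2\phi-\rho$ and $\mathbf{v}-\mathbf{v}_h=(\mathbf{v}-\Pi^1\mathbf{v})+\boldsymbol{\theta}$, the two coupling contributions $\int_\Omega\Pi^2(\textup{div}(c\boldsymbol{\theta}))\rho\,\mathrm{d}\mathbf{x}$ cancel exactly, so the left-hand side collapses to $\tfrac12\frac{\mathrm{d}}{\mathrm{d}t}\|\boldsymbol{\theta},\rho\|_E^2$. Three terms survive on the right. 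The term $\int_\Omega(\Pi^1\mathbf{v}_t-\mathbf{v}_t)\cdot\boldsymbol{\theta}$ is bounded by $Ch^m\|\mathbf{v}_t\|_{\mathbf{H}^m(\textup{div};\Omega)}\|\boldsymbol{\theta}\|_{L^2(\Omega)}$ via \eqref{eqn:5.3.a} with $l=0$ (this is what forces the $W^{1,\infty}$ norm in \eqref{tesi}); the term $\int_\Omega\Pi^2(\textup{div}(c(\mathbf{v}-\Pi^1\mathbf{v})))\rho$ is bounded by $Ch^m\|\mathbf{v}\|_{\mathbf{H}^m(\textup{div};\Omega)}\|\rho\|_{L^2(\Omega)}$ using $L^2$-stability of $\Pi^2$, the Leibniz rule and \eqref{eqn:5.3.a}, exactly as in \cref{lemma5.1}. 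The delicate term appears after rewriting $-\int_\Omega\textup{div}(c\boldsymbol{\theta})\phi+\int_\Omega\Pi^2(\textup{div}(c\boldsymbol{\theta}))\Pi^2\phi$ and applying the defining adjoint relation \eqref{eqn:def_adjoint_projection}, which produces $\int_\Omega\textup{div}(c\boldsymbol{\theta})\,(\Pi^{2,*}\Pi^2\phi-\phi)\,\mathrm{d}\mathbf{x}$.

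This last term is the main obstacle and the source of the lost order. The energy identity controls only $\|\boldsymbol{\theta}\|_{L^2(\Omega)}$, whereas $\textup{div}(c\boldsymbol{\theta})$ carries a derivative of $\boldsymbol{\theta}$, so the derivative must be transferred. I would either integrate by parts, $\int_\Omega\textup{div}(c\boldsymbol{\theta})g=-\int_\Omega c\boldsymbol{\theta}\cdot\nabla g$, where the boundary integrals vanish by the homogeneous Dirichlet datum $\phi=0$ on $\Gamma_D$ and the periodicity of the normal trace of $\boldsymbol{\theta}$ on $\Gamma_P$, and then estimate $\|g\|_{H^1(\Omega)}\leq Ch^{m-1}\|\phi\|_{H^m(\Omega)}$; or, equivalently, invoke the inverse inequality $\|\textup{div}(c\boldsymbol{\theta})\|_{L^2(\Omega)}\leq Ch^{-1}\|\boldsymbol{\theta}\|_{L^2(\Omega)}$, legitimate under the quasi-uniformity assumed on the mesh, together with $\|g\|_{L^2(\Omega)}\leq Ch^m\|\phi\|_{H^m(\Omega)}$. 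In both routes the bound on $g=(\Pi^{2,*}-Id)\Pi^2\phi+(\Pi^2-Id)\phi$ rests on \eqref{eqn:5.3.b} and \eqref{eqn:5.3.b*}, plus the $H^m$-stability $\|\Pi^2\phi\|_{H^m(\Omega)}\leq C\|\phi\|_{H^m(\Omega)}$ coming from \eqref{eqn:5.3.b} with $l=m$; this is precisely where the hypothesis on $\Pi^{2,*}$ is indispensable. Either way the term is $Ch^{m-1}\|\phi\|_{H^m(\Omega)}\|\boldsymbol{\theta}\|_{L^2(\Omega)}$.

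Collecting the three estimates gives $\frac{\mathrm{d}}{\mathrm{d}t}\|\boldsymbol{\theta},\rho\|_E^2\leq Ch^{m-1}\|\mathbf{v},\phi\|_{W^{1,\infty},\mathcal{H}^m}\,\|\boldsymbol{\theta},\rho\|_E$; dividing by $\|\boldsymbol{\theta},\rho\|_E$, integrating in time and using $\boldsymbol{\theta}(0)=\rho(0)=0$ yields $\|\boldsymbol{\theta},\rho\|_{\infty,E}\leq Ch^{m-1}\|\mathbf{v},\phi\|_{W^{1,\infty},\mathcal{H}^m}$. Reinserting this into \eqref{eqn:5.3.5} and absorbing the $Ch^m$ contribution of \cref{lemma5.1} into the dominant $Ch^{m-1}$ term proves \eqref{tesi}. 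I expect the genuinely delicate point to be the rigorous justification of the derivative transfer for the consistency term, namely the vanishing of the boundary integrals under the mixed Dirichlet/periodic conditions (or equivalently the validity of the inverse estimate), since every other step reduces to Cauchy--Schwarz, the stated stability and approximation properties, and a standard Gronwall-type integration.
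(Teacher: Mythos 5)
Your proposal is correct and follows essentially the same route as the paper's proof: the same splitting \eqref{eqn:5.3.5} with $\mathbf{v}_h^P=\Pi^1(\mathbf{v})$, $\phi_h^P=\Pi^2(\phi)$ and zero discrete initial error, the same energy identity obtained by testing the error equations with the discrete errors (your exact cancellation of the coupling terms is a slightly cleaner bookkeeping of what the paper writes as six terms in \eqref{eq:same_steps}, two of which could also be made to cancel), the same use of the adjoint $\Pi^{2,*}$ on the consistency term, and the same inverse inequality producing the factor $h^{-1}$ responsible for the suboptimal rate $h^{m-1}$, followed by the same Gronwall-type integration.

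One caveat: of your two proposed treatments of the delicate term $\int_\Omega \textup{div}(c\boldsymbol{\theta})\,g\,\mathrm{d}\mathbf{x}$ with $g=\Pi^{2,*}\Pi^2\phi-\phi$, only the inverse-inequality route (the one the paper takes) is valid. The integration-by-parts route fails as justified: the boundary term $\int_{\Gamma_D}(c\boldsymbol{\theta}\cdot\mathbf{n})\,g\,\mathrm{d}s$ does not vanish, because although $\phi=0$ on $\Gamma_D$, the function $\Pi^{2,*}\Pi^2\phi$ has no reason to vanish there ($\Pi^{2,*}$ is a global $L^2$ operator that does not preserve boundary values), and $\boldsymbol{\theta}\cdot\mathbf{n}$ is also nonzero on $\Gamma_D$, since in this mixed formulation the Dirichlet condition on $\phi$ is natural and $X^1_h$ carries no essential condition on $\Gamma_D$. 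So you should discard that alternative and keep the inverse estimate, which requires the (global) quasi-uniformity of the mesh; with that choice your argument coincides with the paper's proof.
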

\begin{proof}
Let us consider both the weak formulation \eqref{eqn:2.3and2.4} 
and the semi-discrete problem \eqref{eqn:5.1}. With the 
notation $(\cdot,\cdot)$ indicating the $L^2$ scalar product 
over the physical domain $\Omega$ we can write:
\begin{align*}
    \left((\mathbf{v}_h)_t,\mathbf{w}_h\right) + \left(\Pi^2\text{div}(c\mathbf{w}_h),\phi_h\right) &= \left(\mathbf{v}_t,\mathbf{w}_h\right) + \left(\text{div}(c\mathbf{w}_h),\phi\right),\\
    \left((\phi_h)_t,\psi_h\right) - \left(\Pi^2\text{div}(c\mathbf{v}_h),\psi_h\right) &= \left(\phi_t,\psi_h\right) - \left(\text{div}(c\mathbf{v}),\psi_h\right),    
\end{align*}
for all $\mathbf{w}_h \in X^1_h$ and $\psi_h \in X^2_h$. 
We can respectively subtract from the previous equations the following quantities:
$$
\left((\mathbf{v}_h^P)_t,\mathbf{w}_h\right) + \left(\Pi^2\textup{div}(c\mathbf{w}_h),\phi_h^P\right), \quad \text{and}\quad \left((\phi_h^P)_t,\psi_h\right) - \left(\Pi^2\textup{div}(c\mathbf{v}_h^P),\psi_h\right),
$$
and rearrange the two equations such that it holds that
\begin{multline}\label{first:equation:1}
    \left((\mathbf{v}_h-\mathbf{v}_h^P)_t,\mathbf{w}_h\right) + \left(\Pi^2\textup{div}(c\mathbf{w}_h),(\phi_h-\phi_h^P)\right) = \\
                   = \left((\mathbf{v}-\mathbf{v}_h^P)_t,\mathbf{w}_h\right) + \left(\Pi^2\textup{div}(c\mathbf{w}_h),(\phi-\phi_h^P)\right)+ \left((Id-\Pi^2)\textup{div}(c\mathbf{w}_h),\phi\right),    
\end{multline}
and 
\begin{multline}\label{second:equation:2}
        \left((\phi_h-\phi_h^P)_t,\psi_h\right) - \left(\Pi^2\textup{div}(c\mathbf{v}_h - c\mathbf{v}_h^P),\psi_h\right) = \\
                   = \left((\phi-\phi_h^P)_t,\psi_h\right) - \left(\Pi^2\textup{div}(c\mathbf{v} - c\mathbf{v}_h^P),\psi_h\right) - \left((Id-\Pi^2)\textup{div}(c\mathbf{v}),\psi_h\right).
\end{multline}
By making the choice $\mathbf{w}_h = \mathbf{v}_h - \mathbf{v}_h^P$ 
and $\psi_h = \phi_h - \phi_h^P$, and adding together the 
equations we get
\begin{align} \label{eq:same_steps}
\begin{split}
    \dfrac{1}{2}\pdv{}{t}\|\mathbf{v}_h &- \mathbf{v}_h^P, \phi_h - \phi_h^P\|_{E}^2 = \\ 
    &= \left((\mathbf{v}-\mathbf{v}_h^P)_t,\mathbf{v}_h-\mathbf{v}_h^P\right) + \left((\phi-\phi_h^P)_t,\phi_h-\phi_h^P\right)+\\ 
    &+\left(\Pi^2\textup{div}(c\mathbf{v}_h-c\mathbf{v}_h^P),\phi-\phi_h^P\right) -\left(\Pi^2\textup{div}(c\mathbf{v} - c\mathbf{v}_h^P),\phi_h-\phi_h^P\right)+\\
    &+\left((Id-\Pi^2)\textup{div}(c\mathbf{v}_h-c\mathbf{v}_h^P),\phi\right) - \left((Id-\Pi^2)\textup{div}(c\mathbf{v}),\phi_h-\phi_h^P\right).
\end{split}
\end{align}
Here, by construction $\Pi^2 \textup{div} = \textup{div}\, \Pi^1$. 
Combining this with the definition of the formal adjoint 
operator, we can rewrite the last two terms on the right hand side as
\begin{align} \label{eq:diff_step}
\begin{split}
    \left((Id-\Pi^2)\textup{div}(c\mathbf{v}_h-c\mathbf{v}_h^P),\phi\right) &= \left(\textup{div}(c\mathbf{v}_h^P-c\mathbf{v}_h),(Id-\Pi^{2,*})\phi\right),\\
    \left((Id-\Pi^2)\textup{div}(c\mathbf{v}),\phi_h-\phi_h^P\right) &= \left(\textup{div}(Id-\Pi^1)(c\mathbf{v}),\phi_h-\phi_h^P\right).
\end{split}
\end{align}
Now on the right hand side we can apply the Cauchy-Schwartz inequality, which gives  
\begin{align*}
    \begin{split}
        \dfrac{1}{2}\pdv{}{t}\|\mathbf{v}_h - \mathbf{v}_h^P, \phi_h - \phi_h^P\|_{E}^2 \leq \|(\mathbf{v}-\mathbf{v}_h^P)_t\|_{\mathbf{L}^2(\Omega)} &\|\mathbf{v}_h-\mathbf{v}_h^P\|_{\mathbf{L}^2(\Omega)}  + \\
        +\| (\phi-\phi_h^P)_t\|_{L^2(\Omega)}                                  &\|\phi_h-\phi_h^P\|_{L^2(\Omega)} +\\ 
        +\| (\phi-\phi_h^P)  \|_{L^2(\Omega)}                                  &\|  \Pi^2 \textup{div}\left(c\mathbf{v}_h^P-c\mathbf{v}_h\right) \|_{L^2(\Omega)} +\\ 
        +\| \Pi^2\textup{div}(c\mathbf{v} - c\mathbf{v}_h^P) \|_{L^2(\Omega)}  &\| \phi_h-\phi_h^P\|_{L^2(\Omega)} +\\
        +\| (Id-\Pi^{2,*})\phi \|_{L^2(\Omega)}                                &\| \textup{div}(c\mathbf{v}_h^P-c\mathbf{v}_h)\|_{L^2(\Omega)}  +\\
        +\| \textup{div}(Id-\Pi^1)(c\mathbf{v}) \|_{L^2(\Omega)}               &\| \phi_h-\phi_h^P\|_{L^2(\Omega)} .
    \end{split}
\end{align*}
By boundedness of the projections as in Assumption \ref{assumption_3} and using boundedness of the coefficient $c$, together with inverse inequalities, there exists a constant $C>0$ such that:
\begin{align*}
    \begin{split}
        \dfrac{1}{2}\pdv{}{t}\|\mathbf{v}_h - \mathbf{v}_h^P, \phi_h - \phi_h^P\|_{E}^2 \leq \|(\mathbf{v}-\mathbf{v}_h^P)_t\|_{\mathbf{L}^2(\Omega)} &\|\mathbf{v}_h-\mathbf{v}_h^P\|_{\mathbf{L}^2(\Omega)}  + \\
        +        \| (\phi-\phi_h^P)_t\|_{L^2(\Omega)}                                    &\|\phi_h-\phi_h^P\|_{L^2(\Omega)} +\\ 
        +Ch^{-1 }\| (\phi-\phi_h^P)  \|_{L^2(\Omega)}                                    &\|\mathbf{v}_h -\mathbf{v}_h^P\|_{\mathbf{L}^2(\Omega)} +\\ 
        +C       \| \textup{div}(c\mathbf{v} - c\mathbf{v}_h^P) \|_{L^2(\Omega)}         &\| \phi_h-\phi_h^P\|_{L^2(\Omega)} +\\
        +Ch^{-1 }\| (Id-\Pi^{2,*})\phi \|_{L^2(\Omega)}                                  &\| \mathbf{v}_h - \mathbf{v}_h^P\|_{\mathbf{L}^2(\Omega)}  +\\
        +        \| \textup{div}(Id-\Pi^1)(c\mathbf{v}) \|_{L^2(\Omega)}                 &\| \phi_h-\phi_h^P\|_{L^2(\Omega)} .
    \end{split}
\end{align*}
Again by Cauchy-Schwartz inequality, 
applied to ordered couples of terms in the right hand side, we have: 
\begin{align*} 
    \dfrac{1}{2}\pdv{}{t}\|\mathbf{v}_h - \mathbf{v}_h^P,\ \phi_h - \phi_h^P\|_{E}^2 \leq \|(\mathbf{v}-\mathbf{v}_h^P)_t,\ (\phi - \phi_h^P)_t\|_{E} \|\mathbf{v}_h - \mathbf{v}_h^P,\ \phi_h - \phi_h^P\|_{E} &+\\ 
    +Ch^{-1 }\|(\phi-\phi_h^P),\ \textup{div}(c\mathbf{v} - c\mathbf{v}_h^P) \|_{E} \|\mathbf{v}_h - \mathbf{v}_h^P,\ \phi_h - \phi_h^P\|_{E} &+ \\
    +Ch^{-1 }\|(Id-\Pi^{2,*})\phi,\ \textup{div}(Id-\Pi^1)(c\mathbf{v})\|_{E} \|\mathbf{v}_h - \mathbf{v}_h^P,\ \phi_h - \phi_h^P\|_{E}&.
\end{align*}
Dividing both sides by $\|\mathbf{v}_h - \mathbf{v}_h^P,\ \phi_h - \phi_h^P\|_{E}$ and 
integrating in time in $[0,t]$, we obtain
\begin{align*}
    \|\mathbf{v}_h - \mathbf{v}_h^P,\ \phi_h - \phi_h^P\|_{E} &\leq \|\mathbf{v}_h(\mathbf{x},0) - \mathbf{v}_h^P(\mathbf{x},0),\ \phi_h(\mathbf{x},0) - \phi_h^P(\mathbf{x},0)\|_{E} + \\
    & +\int_0^t \|(\mathbf{v}-\mathbf{v}_h^P)_t,\ (\phi - \phi_h^P)_t\|_{E} \mathrm{d}\tau +\\
    & +\int_0^t Ch^{-1 }\|(\phi-\phi_h^P),\ \textup{div}(c\mathbf{v} - c\mathbf{v}_h^P) \|_{E} \mathrm{d}\tau +\\
    & +\int_0^t Ch^{-1 }\|(Id-\Pi^{2,*})\phi,\ \textup{div}(Id-\Pi^1)(c\mathbf{v})\|_{E} \mathrm{d}\tau.
\end{align*}
Specifying the initial condition for problem \eqref{eqn:5.1} to be 
$\left(\mathbf{v}_h^P(\mathbf{x},0), \phi_h^P(\mathbf{x},0)\right)$ 
it holds $ \|\mathbf{v}_h(\mathbf{x},0) - \mathbf{v}_h^P(\mathbf{x},0),\ \phi_h(\mathbf{x},0) - \phi_h^P(\mathbf{x},0)\|_{E}=0$. 
Since the previous estimate is valid for almost every $t \in[0,T]$ 
we can take the maximum, and there exists a new constant $C>0$ such that 
\begin{align*}
\begin{split}
    \|\mathbf{v}_h - \mathbf{v}_h^P,\ \phi_h - \phi_h^P\|_{\infty,E} \leq  
      C       &\|(\mathbf{v}-\mathbf{v}_h^P)_t,\ (\phi - \phi_h^P)_t\|_{\infty,E} +\\
    + Ch^{-1 }&\|(\phi-\phi_h^P),\ \textup{div}(c\mathbf{v} - c\mathbf{v}_h^P)  \|_{\infty,E} +\\
    + Ch^{-1} &\|(Id-\Pi^{2,*})\phi,\ \textup{div}(Id-\Pi^1)(c\mathbf{v})\|_{\infty,E}.
\end{split}
\end{align*}
Now the left hand side in \eqref{tesi} splits as in \eqref{eqn:5.3.5}, therefore we have 
\begin{align*}
    \begin{split}
        \|\mathbf{v} - \mathbf{v}_h,\ \phi - \phi_h\|_{\infty,E} \leq  \
                  &\|\mathbf{v}-\mathbf{v}_h^P,\ \phi - \phi_h^P\|_{\infty,E} +\\
        + C       &\|(\mathbf{v}-\mathbf{v}_h^P)_t,\ (\phi - \phi_h^P)_t\|_{\infty,E} +\\
        + Ch^{-1} &\|(\phi-\phi_h^P),\ \textup{div}(c\mathbf{v} - c\mathbf{v}_h^P)  \|_{\infty,E} +\\
        + Ch^{-1} &\|(Id-\Pi^{2,*})\phi,\ \textup{div}(Id-\Pi^1)(c\mathbf{v})\|_{\infty,E}.
        \end{split}
\end{align*}    
Finally, from Lemma \ref{lemma5.1} and Assumption \ref{assumption_3} we obtain
\begin{align*}
    \begin{split}
        \|\mathbf{v} - \mathbf{v}_h,\ \phi - \phi_h\|_{\infty,E} \leq C 
        &\Big(h^{m-1} \|\mathbf{v},\ \phi \|_{\infty,\mathcal{H}^m} + 
              h^{m} \|\mathbf{v}_t,\ \phi_t \|_{\infty,\mathcal{H}^m} \Big),
    \end{split}
\end{align*}    
which concludes the proof of this theorem.
\end{proof}

\begin{rmk}\label{remark:stima_ottimale}
    The suboptimality of the estimate comes from using inverse inequalities to control the 
    divergence of the error. Optimality can be recovered if we define suitable approximations 
    $\mathbf{v}_h^P$ and $\phi_h^P$ that control the divergence, such as in \cite{boffi2013convergence}. 
    However, the projections $\pLLM[1]$ and $\pLLM[2]$, that we implement and test numerically, 
    do not satisfy Assumption \ref{assumption_3}, therefore we directly move to next section, 
    dedicated to error estimates for these projections.
\end{rmk}

\subsection{Weaker approximation assumptions}
\label{sec:sub_error_analysis_with_relaxed_assumptions}
In this section we proof a convergence result for weaker assumptions than the ones in Assumption \ref{assumption_3}. 
The reason is the following. Of considerable practical interest are projections in discrete spaces that have good 
approximation properties and are computationally fast and efficient to compute, i.e. \cite{kraus2017gempic,holderied2021mhd}. 
A very general family of such projections are the quasi-interpolators proposed in \cite{lee2000some}.
Since we do not have good approximation properties of the adjoint projection of such quasi-interpolants, 
we will require here to relax the assumptions we made in the previous section.
{ \begin{ass}
\label{assumption_4}
The projections $\Pi^1$ and $\Pi^{2}$ satisfy
\begin{equation*}
    \|\Pi^1\|_{\mathbf{L}^{\infty}(\widetilde{K}) \to \mathbf{L}^{\infty}(K)} = C_1 ,\quad \text{and} \quad \|\Pi^2\|_{{L}^{\infty}(\widetilde{K}) \to {L}^{\infty}(K)} = C_2.
\end{equation*}
where $K\in \mathcal{M}$ and $\widetilde{K}$ is its support extension.
Moreover, given $\mathbf{w}\in \mathbf{H}^m(\mathrm{div};\Omega)$ and 
$\psi \in H^m(\Omega)$, with at least $m\geq2$, for $0\leq l < m \leq p $, and 
$p = \min_{i=1}^d\{p_i\}$, there exists a constant $C$ such 
that it holds:
\begin{subequations}\label{eqn:stime_sobolev}
    \begin{align}
        \label{eqn:stima_hdiv}
        \|(Id - \Pi^1)\mathbf{w}\|_{\mathbf{H}^l(\mathrm{div}; \Omega)}  &\leq C h^{m-l} \| \mathbf{w}\|_{\mathbf{H}^m(\mathrm{div};\Omega)},\\ 
        \label{eqn:stima_hm}
        \|(Id - \Pi^2)\psi\|_{{H}^l(\Omega)}  &\leq C h^{m-l} \| \psi \|_{{H}^m(\Omega)},
    \end{align}        
\end{subequations}
where $Id$ is the identity operator, and $h$ is the mesh size. 
\end{ass}}
{ Notice that, the projectors $\pLLM[1]$ and $\pLLM[2]$ satisfy Assumption \ref{assumption_4}, but not Assumption \ref{assumption_3}. 
We consider again $\mathbf{v}_h^P = \Pi^1(\mathbf{v}) $ and $\phi_h^P = \Pi^2(\phi)$, and we
notice that under Assumption \ref{assumption_4} it still holds \cref{lemma5.1}. Therefore we have the following theorem.}

\begin{thm}\label{theroem5.3}
Under Assumption \ref{assumption_4}, together with the commutativity of the projectors, 
$\Pi^2 \textup{div} = \textup{div} \Pi^1$, given $\mathbf{v} \in \mathbf{H}^m(\mathrm{div};\Omega)$ and $\phi \in H^m(\Omega)$, 
with at least $m\geq 2$, and $c \in \mathcal{C}^{\infty}(\Omega)$, it holds that
\begin{equation}
\label{tesi.weak}
    \|\mathbf{v}-\mathbf{v}_h,\phi-\phi_h\|_{\infty,E} \leq C h \|\mathbf{v},\phi\|_{W^{1,\infty},\mathcal{H}^m}.
\end{equation}
\end{thm}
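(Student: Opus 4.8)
The plan is to run the energy argument from the proof of Theorem~\ref{Theorem5.2} unchanged up to the identity \eqref{eq:same_steps}, and to modify only the two consistency terms on its last line, since these are the only places where the adjoint $\Pi^{2,*}$ (not available under Assumption~\ref{assumption_4}) was used. Setting $\mathbf{e}_h\coloneqq\mathbf{v}_h-\mathbf{v}_h^P$ and $\epsilon_h\coloneqq\phi_h-\phi_h^P$, these terms are, up to sign, $A\coloneqq\left((Id-\Pi^2)\textup{div}(c\mathbf{e}_h),\phi\right)$ and $B\coloneqq\left((Id-\Pi^2)\textup{div}(c\mathbf{v}),\epsilon_h\right)$. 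The term $B$ is handled exactly as before: commutativity \eqref{eqn:4.8} gives $B=\left(\textup{div}(Id-\Pi^1)(c\mathbf{v}),\epsilon_h\right)$, so \eqref{eqn:stima_hdiv} with $l=0$ and the smoothness of $c$ yield $|B|\le Ch^{m}\|\mathbf{v}\|_{\mathbf{H}^m(\textup{div};\Omega)}\|\epsilon_h\|_{L^2(\Omega)}$.

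The new idea concerns $A$. Because $\mathbf{e}_h\in X^1_h\subset X^1$, commutativity \eqref{eqn:4.8} lets us write $(Id-\Pi^2)\textup{div}(c\mathbf{e}_h)=\textup{div}\,\mathbf{g}$ with $\mathbf{g}\coloneqq(Id-\Pi^1)(c\mathbf{e}_h)\in\mathbf{H}(\textup{div};\Omega)$, so that $A=\left(\textup{div}\,\mathbf{g},\phi\right)$. I would then integrate by parts to move the derivative onto $\phi$, obtaining $A=-\left(\mathbf{g},\nabla\phi\right)+\int_{\partial\Omega}(\mathbf{g}\cdot\mathbf{n})\,\phi\,\mathrm{d}s$. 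The boundary integral vanishes: on $\Gamma_D$ because $\phi=0$, and on $\Gamma_P$ because both $\phi$ and the normal trace $\mathbf{g}\cdot\mathbf{n}$ are periodic (the latter since $c\mathbf{e}_h$ and $\Pi^1(c\mathbf{e}_h)$ belong to $\mathbf{H}_P(c,\textup{div};\Omega)$ and $c$ is positive and periodic), so the contributions from $\Gamma_{P,1}$ and $\Gamma_{P,2}$ cancel thanks to the opposite outer normals. This integration by parts is decisive: keeping $\mathbf{g}$ free of the divergence avoids the factor $h^{-1}$ that an inverse inequality would otherwise produce, and reduces the matter to estimating $\|\mathbf{g}\|_{\mathbf{L}^2(\Omega)}$.

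The crux of the proof, and the step I expect to be the main obstacle, is the super-approximation estimate
\begin{equation*}
    \|(Id-\Pi^1)(c\mathbf{w}_h)\|_{\mathbf{L}^2(\Omega)}\le Ch\,\|\mathbf{w}_h\|_{\mathbf{L}^2(\Omega)},\qquad\forall\,\mathbf{w}_h\in X^1_h,
\end{equation*}
which plays the role that the adjoint approximation property of Assumption~\ref{assumption_3} played in Theorem~\ref{Theorem5.2}. I would prove it elementwise. On $K\in\mathcal{M}$ fix a point $\mathbf{x}_K\in K$ and split $c\mathbf{w}_h=c(\mathbf{x}_K)\mathbf{w}_h+(c-c(\mathbf{x}_K))\mathbf{w}_h$; since $c(\mathbf{x}_K)\mathbf{w}_h\in X^1_h$ is reproduced by $\Pi^1$, only the second summand contributes, and the local $L^\infty$-boundedness in Assumption~\ref{assumption_4} gives $\|(Id-\Pi^1)(c\mathbf{w}_h)\|_{\mathbf{L}^\infty(K)}\le(1+C_1)\|(c-c(\mathbf{x}_K))\mathbf{w}_h\|_{\mathbf{L}^\infty(\widetilde{K})}\le Ch\,\|\mathbf{w}_h\|_{\mathbf{L}^\infty(\widetilde{K})}$, where $|c-c(\mathbf{x}_K)|\le\|\nabla c\|_{L^\infty}\textup{diam}(\widetilde{K})\le Ch$ by smoothness of $c$. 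Passing from $L^\infty$ to $L^2$ on $K$, applying the inverse estimate $\|\mathbf{w}_h\|_{\mathbf{L}^\infty(\widetilde{K})}\le Ch^{-d/2}\|\mathbf{w}_h\|_{\mathbf{L}^2(\widetilde{K})}$ (valid for splines on the shape-regular mesh) and summing over elements with the finite-overlap property of the support extensions yields the claim. With $\mathbf{w}_h=\mathbf{e}_h$ this gives $|A|\le Ch\,\|\mathbf{e}_h\|_{\mathbf{L}^2(\Omega)}\|\phi\|_{H^1(\Omega)}$.

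It then remains to reassemble the estimate as in Theorem~\ref{Theorem5.2}. The first four terms of \eqref{eq:same_steps} are treated identically, using the $L^2$-stability of $\Pi^2$ (which follows from its local $L^\infty$-boundedness), inverse inequalities and Lemma~\ref{lemma5.1}; the only term carrying a negative power of $h$ is the one containing the discrete divergence, and it contributes $O(h^{m-1})$. Collecting these with $|A|=O(h)$ and $|B|=O(h^{m})$, and using $m\ge2$ so that $h^{m-1}\le h$ and $h^{m}\le h$, every term in $\tfrac12\partial_t\|\mathbf{e}_h,\epsilon_h\|_E^2$ factors as $\|\mathbf{e}_h,\epsilon_h\|_E$ times a quantity bounded by $Ch\,\|\mathbf{v},\phi\|_{W^{1,\infty},\mathcal{H}^m}$. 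Dividing by $\|\mathbf{e}_h,\epsilon_h\|_E$, integrating in time from the vanishing initial error, and taking the supremum over $[0,T]$ gives $\|\mathbf{e}_h,\epsilon_h\|_{\infty,E}\le Ch\,\|\mathbf{v},\phi\|_{W^{1,\infty},\mathcal{H}^m}$; the splitting \eqref{eqn:5.3.5} together with \eqref{eqn:5.4a} (which costs at most one more power of $h$ since $m\ge1$) finally yields \eqref{tesi.weak}.
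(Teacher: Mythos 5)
Your overall architecture coincides with the paper's proof: you keep the energy identity \eqref{eq:same_steps}, you treat the consistency term $\left((Id-\Pi^2)\textup{div}(c\mathbf{v}),\phi_h-\phi_h^P\right)$ exactly as before via commutativity, and for $\left((Id-\Pi^2)\textup{div}(c\mathbf{e}_h),\phi\right)$ you use commutativity plus integration by parts to arrive at $-\left((Id-\Pi^1)(c\mathbf{e}_h),\nabla\phi\right)$, which is precisely the paper's replacement for the adjoint step \eqref{eq:diff_step}. The super-approximation bound $\|(Id-\Pi^1)(c\mathbf{w}_h)\|_{\mathbf{L}^2(\Omega)}\le Ch\|\mathbf{w}_h\|_{\mathbf{L}^2(\Omega)}$ that you single out as the crux is indeed the crux: the paper simply cites \cite[Theorem~2.2]{bertoluzza1999discrete} for it (this is \eqref{eqn:5.10}), whereas your freezing-the-coefficient argument (spline reproduction of $c(\mathbf{x}_K)\mathbf{w}_h$, local $L^\infty$ boundedness, $|c-c(\mathbf{x}_K)|\le Ch$, inverse estimates, finite overlap of support extensions) is a sound self-contained derivation, and your explicit check that the boundary terms vanish is more careful than the paper's presentation.

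The genuine gap is the sentence asserting that the remaining terms are ``treated identically, using the $L^2$-stability of $\Pi^2$ (which follows from its local $L^\infty$-boundedness).'' That implication is false, and it is exactly the point of this theorem: an operator that is only locally $L^\infty$-bounded, such as the point-evaluation quasi-interpolants of Section~\ref{sec:sub_quasi_interpolant_projections}, need not be $L^2$-stable --- it is not even defined on all of $L^2(\Omega)$ --- which is why Assumption \ref{assumption_4} was introduced as a weakening of Assumption \ref{assumption_3}. Concretely, two terms of \eqref{eq:same_steps} still carry $\Pi^2$ and cannot be handled as in Theorem \ref{Theorem5.2}: (i) $\left(\Pi^2\textup{div}(c\mathbf{e}_h),\phi-\phi_h^P\right)$ and (ii) $\left(\Pi^2\textup{div}(c\mathbf{v}-c\mathbf{v}_h^P),\epsilon_h\right)$. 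Term (i) is repairable with the very technique you used for super-approximation: $\textup{div}(c\mathbf{e}_h)=\nabla c\cdot\mathbf{e}_h+c\,\textup{div}\,\mathbf{e}_h$ is smooth-times-spline, so H\"older, local $L^\infty$ stability, spline inverse estimates and finite overlap give $\|\Pi^2\textup{div}(c\mathbf{e}_h)\|_{L^2(\Omega)}\le Ch^{-1}\|\mathbf{e}_h\|_{\mathbf{L}^2(\Omega)}$ elementwise, which is what the paper does. Term (ii) is the real problem: its argument contains the non-discrete field $\mathbf{v}$, so inverse inequalities are unavailable and your appeal to $L^2$-stability has no substitute inside your proposal. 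The paper bounds it by $Ch^{m}\|\mathbf{v}\|_{\mathbf{H}^m(\textup{div};\Omega)}$ through a second elementwise argument combining local $L^\infty$ stability with local $L^\infty$ approximation estimates of Lyche--Schumaker type \cite[Theorem~10.2]{lyche1975local}; this is also where the hypothesis $m\ge 2$ genuinely enters (one needs $m>d/2$ to control $H^m$ functions pointwise), not merely to write $h^{m-1}\le h$ as in your final assembly. Without this ingredient the estimate of term (ii) is unjustified and the proof as written is incomplete.
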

\begin{proof}
The first steps of the proof are as in Theorem \ref{Theorem5.2} until the choice of the test functions, { and in particular \eqref{eq:same_steps} is valid. Then, by construction we have $\Pi^2 \textup{div} = \textup{div}\Pi^1 $, and by integrating by parts we obtain}
\begin{align*}
    \left((Id-\Pi^2)\textup{div}(c\mathbf{v}_h-c\mathbf{v}_h^P),\phi\right) =& -\left((Id-\Pi^1)(c\mathbf{v}_h-c\mathbf{v}_h^P),\grad{\phi}\right)\\
    \left((Id-\Pi^2)\textup{div}(c\mathbf{v}),\phi_h-\phi_h^P\right) =& \left(\textup{div}(Id-\Pi^1)(c\mathbf{v}),\phi_h-\phi_h^P\right),
\end{align*}
note the difference in the first equation with respect to \eqref{eq:diff_step}.
By using Cauchy-Schwartz inequality 
as in the proof of Theorem~\ref{Theorem5.2}, we have 
\begin{align*}
    \begin{split}
        \dfrac{1}{2}\pdv{}{t}\|\mathbf{v}_h - \mathbf{v}_h^P, \phi_h - \phi_h^P\|_{E}^2 \leq \|(\mathbf{v}-\mathbf{v}_h^P)_t\|_{\mathbf{L}^2(\Omega)} \|\mathbf{v}_h-\mathbf{v}_h^P\|_{\mathbf{L}^2(\Omega)} +\\
        +       \|(\phi-\phi_h^P)_t\|_{L^2(\Omega)}    \|\phi_h-\phi_h^P\|_{L^2(\Omega)}+\\ 
        +       \|\phi-\phi_h^P \|_{L^2(\Omega)}  { \|\Pi^2\textup{div}(c\mathbf{v}_h-c\mathbf{v}_h^P)\|_{\mathbf{L}^2(\Omega)}} + \\
        +   { \|\Pi^2\textup{div}(c\mathbf{v} - c\mathbf{v}_h^P)\|_{L^2(\Omega)}} \|\phi_h-\phi_h^P\|_{L^2(\Omega)}+\\
        +       \|\phi\|_{H^1(\Omega)}  \|(Id-\Pi^1)(c\mathbf{v}_h-c\mathbf{v}_h^P)\|_{\mathbf{L}^2(\Omega)} +\\
        +       \|\textup{div}(Id-\Pi^1)(c\mathbf{v})\|_{L^2(\Omega)} \|\phi_h-\phi_h^P\|_{L^2(\Omega)},
    \end{split}
\end{align*}
{and the only differences with respect to the proof of the previous theorem are in the stability assumptions of $\Pi^2$ and in the fifth term of the right-hand side.
In order to bound this last one,} we can apply the superconvergence results stated in \cite[Theorem~2.2]{bertoluzza1999discrete}, that is
\begin{equation}
\label{eqn:5.10}
    \|(Id-\Pi^1)(c\mathbf{v}_h-c\mathbf{v}_h^P)\|_{\mathbf{L}^2(\Omega)} \leq  C h \|\mathbf{v}_h-\mathbf{v}_h^P\|_{\mathbf{L}^2(\Omega)}.
\end{equation}
Next, we bound $\|\Pi^2\textup{div}(c\mathbf{v}_h-c\mathbf{v}_h^P)\|_{\mathbf{L}^2(\Omega)}$ by local arguments. Given $K \in \mathcal{M}$, since $|K|\leq h^d$ where $d$ is the dimension of $\Omega$, we first apply H{\"o}lder inequality, 
then stability of Assumption \ref{assumption_4} together with uniform boundedness of $c$ and finally inverse 
inequalities for splines, that is
\begin{align*}
    \begin{split}
        \|\Pi^2\textup{div}(c\mathbf{v}_h-c\mathbf{v}_h^P)\|_{\mathbf{L}^2(K)}^2 &\leq |K|\, \|\Pi^2\textup{div}(c\mathbf{v}_h-c\mathbf{v}_h^P)\|_{\mathbf{L}^{\infty}(K)}^2 \\
        &\leq h^d C_2^2     \|\nabla c \cdot (\mathbf{v}_h-\mathbf{v}_h^P) + c\, \textup{div}(\mathbf{v}_h-\mathbf{v}_h^P)\|_{{L}^{\infty}(\widetilde{K})}^2\\
        &\leq h^d C \left(  \|\mathbf{v}_h-\mathbf{v}_h^P \|_{\mathbf{L}^{\infty}(\widetilde{K})} + \|\textup{div} (\mathbf{v}_h-\mathbf{v}_h^P) \|_{{L}^{\infty}(\widetilde{K})}\right)^2\\
        &\leq h^d C \left(  \|\mathbf{v}_h-\mathbf{v}_h^P \|_{\mathbf{L}^{\infty}(\widetilde{K})} + h^{-1}\|\mathbf{v}_h-\mathbf{v}_h^P\|_{\mathbf{L}^{\infty}(\widetilde{K})}\right)^2\\
        &\leq h^d C h^{-2}  \|\mathbf{v}_h-\mathbf{v}_h^P \|_{\mathbf{L}^{\infty}(\widetilde{K})}^2\\
        &\leq C h^{-2}\|\mathbf{v}_h-\mathbf{v}_h^P \|_{\mathbf{L}^{2}(\widetilde{K})}^2.
    \end{split}
\end{align*}
{ Notice that in the last inequality it is essential to have a shape regularity assumption of the mesh.}  
Taking the square root 
and by standard arguments we have immediately the global inequality
$$
\|\Pi^2\textup{div}(c\mathbf{v}_h-c\mathbf{v}_h^P)\|_{\mathbf{L}^2(\Omega)} \leq C{ h^{-1}} \|\mathbf{v}_h-\mathbf{v}_h^P \|_{\mathbf{L}^{2}(\Omega)}.
$$
Recall that $\mathbf{v}_h^P = \Pi^1 (\mathbf{v})$ and $\phi_h^P = \Pi^2(\phi)$, 
together with $\textup{div} \Pi^1 = \Pi ^2 \textup{div}$. Again, with the same arguments, 
and with $|K|\leq h^d$, we have 
\begin{align*}
    \begin{split}
        \|\Pi^2\textup{div}(c\mathbf{v} - c\mathbf{v}_h^P) \|_{L^2({K})}^2 &\leq h^d\|\Pi^2\textup{div}(c\mathbf{v} - c\mathbf{v}_h^P)\|_{L^{\infty}(K)}^2 \\
        &\leq h^dC_2^2 \|\nabla c \cdot (\mathbf{v}-\mathbf{v}_h^P) + c\, \textup{div}(\mathbf{v}-\mathbf{v}_h^P)\|_{L^{\infty}(\widetilde{K})}^2 \\
        &\leq h^dC \left(\|\mathbf{v}-\mathbf{v}_h^P \|_{\mathbf{L}^{\infty}(\widetilde{K})} + \|\textup{div} (\mathbf{v}-\mathbf{v}_h^P) \|_{{L}^{\infty}(\widetilde{K})}\right)^2\\
        &\leq h^dC \left(\|(Id- \Pi^1)\mathbf{v} \|_{\mathbf{L}^{\infty}(\widetilde{K})}^2 + \|(Id-\Pi^2)\textup{div}(\mathbf{v}) \|_{{L}^{\infty}(\widetilde{K})}^2\right)\\
        &\leq h^dC \left( h^{2m-d} \|\mathbf{v} \|_{\mathbf{H}^{m}(\widetilde{\widetilde{K}})}^2 +  h^{2m-d} \|\textup{div}(\mathbf{v}) \|_{{H}^{m}(\widetilde{\widetilde{K}})}^2\right)\\
        &\leq C h^{2m} \|\mathbf{v} \|_{H^m(\textup{div};\widetilde{\widetilde{K}})}^2,
    \end{split}
\end{align*}
where { $\widetilde{\widetilde{K}}$ is the support extension of $\widetilde{K}$}. Notice that we used Cauchy-Schwartz inequality on fourth row, 
and the local approximation estimates of the kind \cite[Theorem 10.2]{lyche1975local} on fifth row. Taking the square root and by standard arguments this global inequality is straightforward 
$$
\|\Pi^2\textup{div}(c\mathbf{v} - c\mathbf{v}_h^P)   \|_{L^2({\Omega})} \leq Ch^{m} \|\mathbf{v} \|_{H^m(\textup{div};\Omega)}.
$$
We bound the remaining of approximation errors as it is done for Lemma \ref{lemma5.1}, and together with the above inequalities, we have
\begin{align*}
    \begin{split}
        \dfrac{1}{2}\pdv{}{t}\|\mathbf{v}_h - \mathbf{v}_h^P, \phi_h - \phi_h^P\|_{E}^2 \leq Ch^m\|\mathbf{v}_t\|_{\mathbf{H}^m(\textup{div}; \Omega)} &\|\mathbf{v}_h-\mathbf{v}_h^P\|_{\mathbf{L}^2(\Omega)} +\\
        + Ch^m     \|\phi_t\|_{H^m(\Omega)}  & \|\phi_h-\phi_h^P\|_{L^2(\Omega)}+\\ 
        +Ch^{m-1}  \|\phi\|_{H^m(\Omega)}    & \|\mathbf{v}_h-\mathbf{v}_h^P\|_{\mathbf{L}^2(\Omega)}+\\
        +Ch^{m}    \|\mathbf{v}\|_{\mathbf{H}^m(\textup{div}; \Omega)}  & \|\phi_h-\phi_h^P\|_{L^2(\Omega)}+\\
        +Ch        \|\phi\|_{H^1(\Omega)}    & \|\mathbf{v}_h-\mathbf{v}_h^P\|_{\mathbf{L}^2(\Omega)}+\\
        +Ch^m      \|\mathbf{v}\|_{\mathbf{H}^m(\textup{div}; \Omega)} &\|\phi_h-\phi_h^P\|_{L^2(\Omega)}.
    \end{split}
\end{align*}    
By using Cauchy-Schwartz inequality and adding together the similar terms, we have 
\begin{align*}
        \dfrac{1}{2}\pdv{}{t}\|\mathbf{v}_h - \mathbf{v}_h^P, \phi_h - \phi_h^P\|_{E}^2  
        & \leq Ch^m \|\mathbf{v}_t,\ \phi_t\|_{\mathcal{H}^m} \|\mathbf{v}_h-\mathbf{v}_h^P,\phi_h-\phi_h^P\|_{E}\\ 
        &   +  Ch \|\mathbf{v},\ \phi\|_{\mathcal{H}^m}     \|\mathbf{v}_h-\mathbf{v}_h^P,\phi_h-\phi_h^P\|_{E}.
\end{align*} 
Dividing by $\|\mathbf{v}_h-\mathbf{v}_h^P,\phi_h-\phi_h^P\|_{E}$ and integrating in time, we get 
\begin{align*}
    \|\mathbf{v}_h - \mathbf{v}_h^P, \phi_h - \phi_h^P\|_{E} &\leq \|\mathbf{v}_h(\boldsymbol{x},0) - \mathbf{v}_h^P(\boldsymbol{x},0), \phi_h(\boldsymbol{x},0) - \phi_h^P(\boldsymbol{x},0)\|_{E} + \\ 
    & + Ch^m \int_0^t \|\mathbf{v}_t,\ \phi_t\|_{\mathcal{H}^m} \mathrm{d}\tau + Ch \int_0^t  \|\mathbf{v},\ \phi\|_{\mathcal{H}^m} \mathrm{d}\tau.
\end{align*}
Specifying the initial condition for problem \eqref{eqn:5.1} to be $\left(\mathbf{v}_h^P(\boldsymbol{x},0), \phi_h^P(\boldsymbol{x},0)\right)$, 
it holds $ \|\mathbf{v}_h(\boldsymbol{x},0) - \mathbf{v}_h^P(\boldsymbol{x},0),\ \phi_h(\boldsymbol{x},0) - \phi_h^P(\boldsymbol{x},0)\|_{E}=0$. 
Taking the maximum over $[0,T]$, we have
\begin{equation}\label{eqn:5.12}
    \|\mathbf{v}_h - \mathbf{v}_h^P, \phi_h - \phi_h^P\|_{\infty,E} \leq  Ch \|\mathbf{v},\ \phi\|_{W^{1,\infty},\mathcal{H}^m}.
\end{equation}
By putting together \eqref{eqn:5.3.5}, Lemma \ref{lemma5.1} and \eqref{eqn:5.12} we end up proving \eqref{tesi.weak}.
\end{proof}
{
\begin{rmk}
    We have proved linear convergence under $h$ refinement for the semi-discretization \eqref{eqn:semi_disc_in_spazio}
    using projections as in Assumption \ref{assumption_4}. This is the case of the projections $\pLLM[1]$ and $\pLLM[2]$. 
    However, in Section \ref{sec:numerical_simulations_in_2d} we investigate numerically this error bound, and show high 
    order rates of convergence. There is numerical evidence that 
    $\left( (Id-\Pi^1) (c\mathbf{v}_h - c\mathbf{v_h^P}), \nabla \phi \right) \approx h^{p}$, while 
    $\|(Id-\Pi^1)(c\mathbf{v}_h-c\mathbf{v}_h^P)\|_{\mathbf{L}^2(\Omega)}$ depends linearly on the mesh size $h$.  
\end{rmk}
}

\section{Implementation}
\label{sec:implementation_and_numerical_results}
{ In this section we introduce the matrix form associated to \eqref{eqn:5.8}, giving further details about the computation of the projections. 
Let us start from equation \eqref{eqn:5.8}, that must hold for all $ \mathbf{w}_h \in X_h^1$. 
Consider as test functions $\{\mathbf{b}_{i,h}\}_{i = 1}^{N}$, the basis functions of $X_h^1$, 
which are the push-forward with the Piola transformation map $\iota^1$ of the B-splines on the parametric domain.  
To assemble the matrices involved in \eqref{eqn:5.8} we compute the projections of the basis functions}
\begin{equation*}
    \Pi^1(c\mathbf{b}_{i,h}) = \sum_{l = 1 }^{N} \theta^i_l \mathbf{b}_{l,h},
\end{equation*} 
and we denote by $\boldsymbol{\theta}^i = (\theta^i_1,\dots, \theta^i_{N})^T$, the column vectors with the coefficients of
the projections $\Pi^1(c\mathbf{b}_{i,h})$ into the space $X^1_h$.
{We then define the matrix $\Tilde{\mathbf{A}} \in \mathbb{R}^{N\times N}$ that encapsulates the second term of \eqref{eqn:5.8}, and with the previous notation each entry of the matrix can be computed as
\begin{equation*}
    \begin{split}
         [\Tilde{\mathbf{A}}]_{i,j} & \coloneqq \int_{\Omega}\text{div}\left(\Pi^{1} (c\mathbf{b}_{i,h})\right)\text{div}\left(\Pi^{1} (c\mathbf{b}_{j,h})\right)\mathrm{d}\mathbf{x}\\
         &= \sum_{l = 1}^N \sum_{m = 1}^N \theta^i_l\theta^j_m\int_{\Omega}\text{div}( \mathbf{b}_{l,h})\text{div}(\mathbf{b}_{m,h})\mathrm{d}\mathbf{x} 
         = (\boldsymbol{\theta}^i)^T \mathbf{A} \boldsymbol{\theta}^j,
    \end{split}
\end{equation*}
for $i,j=1,\ldots, N$, with $\mathbf{A} \in \mathbb{R}^{N\times N}$ defined as}
$[\mathbf{A}]_{l,m} \coloneqq \int_{\Omega}\text{div}( \mathbf{b}_{l,h})\text{div}(\mathbf{b}_{m,h})\mathrm{d}\mathbf{x}$.
{It is therefore convenient to store the coefficients of the projectors in the} matrix $\mathbf{\Theta} = [\boldsymbol{\theta}^1|\dots|\boldsymbol{\theta}^N] \in \mathbb{R}^{N\times N}$, from which we obtain $\Tilde{\mathbf{A}} = \mathbf{\Theta} ^\top \mathbf{A \Theta}$.

{For the computation of the projections, we make use of the commutativity property \eqref{eqn:4.6new}, from which we obtain the two equivalent expressions}
\begin{align*}
    \iota^1 ( \Pi^1(c\mathbf{b}_{i,h})) &= \hat{\Pi}^1 (\iota^1 (c \mathbf{b}_{i,h})) = \hat{\Pi}^1 ( \hat{c} \hat{\mathbf{b}}_{i,h}) = \sum_{l=1}^N \hat{\theta^i_l} \hat{\mathbf{b}}_{l,h},\\
    \iota^1 ( \Pi^1(c\mathbf{b}_{i,h})) &= \iota^1(\sum_{l=1}^N {\theta^i_l} {\mathbf{b}}_{l,h}) = \sum_{l=1}^N {\theta^i_l} \iota^1({\mathbf{b}}_{l,h}) = \sum_{l=1}^N {\theta^i_l} \hat{\mathbf{b}}_{l,h},
\end{align*}
{ where $\hat{\theta}^i_l$ are the coefficients of the projection $\hat{\Pi}^1(\hat{c}\hat{\mathbf{b}}_{i,h})$, and $\hat{c} = c \circ \mathcal{F}$. 
Notice that $\hat{\theta}^i_l = \theta^i_l$, and therefore the projections can be computed in the parametric domain, exploiting the tensor-product structure.} 
The details on the computation of these projections are given in \cref{Appendix}. { We finally recall that, 
in order to commute with the divergence operator, the computation of the projection involves \eqref{eqn:3.4}, 
or \eqref{eqn:3.5} for the periodic case, both of them requiring the application of a quadrature formula.}


Finally, by introducing the notation $\underline{\mathbf{v}}^n$ and 
$\underline{\phi}^n$ for the coefficients of the unknown fields 
$\mathbf{v}_h^n $ and $\phi_h^n$ respectively, equation 
\eqref{eqn:5.8} can be written in the following matrix form:
\begin{equation*}
    \left(\mathbf{M} + \frac{k^2}{4} \mathbf{\Theta}^T \mathbf{A} \mathbf{\Theta}\right) \underline{\mathbf{v}}^{n+1} =     \left(\mathbf{M} - \frac{k^2}{4} \mathbf{\Theta}^T \mathbf{A} \mathbf{\Theta}\right) \underline{\mathbf{v}}^{n} - k \mathbf{\Theta}^T {\mathbf{B}} \underline{\phi}^{n}, \quad \text{for }n = 0,\dots,N-1,
\end{equation*}
where the matrix $\mathbf{M} \in \mathbb{R}^{N \times N}$ denotes 
the mass matrix for the space $X^1_h$, 
and $\mathbf{B} \in \mathbb{R}^{N\times M}$ is defined as 
$ [\mathbf{B}]_{i,j} = \int_{\Omega}b_{j,h}\text{div}(\mathbf{b}_{i,h})\mathrm{d}\mathbf{x}$.
\begin{rmk}
Notice that the computation of the involved matrices is independent 
of time, that is, $\mathbf{M}, \mathbf{A}, \mathbf{B}$ and 
$\mathbf{\Theta}$ {can be computed} only once at the beginning of our method. 
If the coefficient $c$ is time dependent, it is necessary to 
recompute only $\mathbf{\Theta}$ at every time step. 
\end{rmk}


\section{Numerical results}
\label{sec:numerical_simulations_in_2d}

In this section, we have numerically investigated the approximation properties of the method 
by conducting academic tests to determine the convergence order under $h$-refinements. Furthermore, 
concerning the employment of the quasi-interpolant in Section \ref{sec:sub_quasi_interpolant_projections} 
within the described numerical method, even if it is tested on a simplified model problem, we have supplemented 
the array of numerical tests conducted by Holderied \textit{et al.} \cite{holderied2021mhd}. 
Indeed, their studies observed the favorable approximation properties of the quasi-interpolant but not of the 
overall global method. Lastly, we have verified the conservation of the total energy for this method.
All the numerical tests have been performed in Matlab, with the isogeometric analysis open source package GeoPDEs \cite{geopdes3.0}. 

\begin{figure}[t]
    \centering
    \subfloat[]
    [Numerical solution for Dirichlet homogeneous boundary conditions. The arrows show the direction of the velocity field $\mathbf{v}_h$ while the color plot shows the pressure map $\phi_h$.\label{fig:5.3a}]
    {\includegraphics[scale = 0.49]{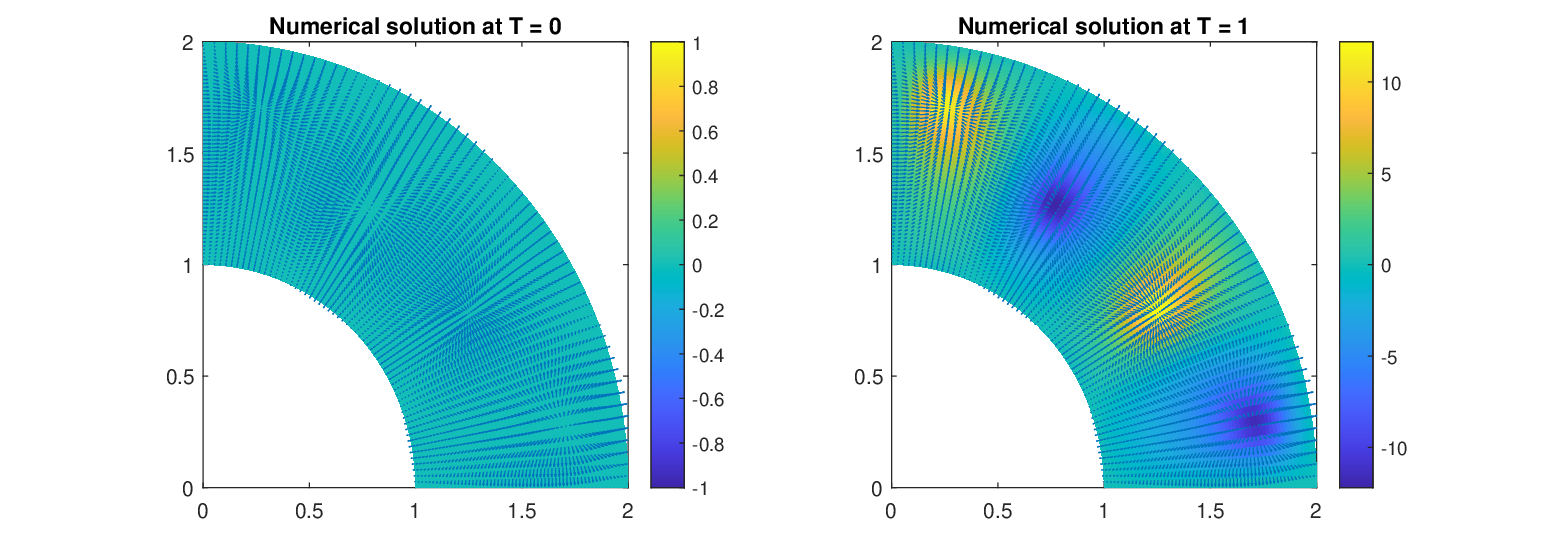}}
    \vskip 0.2cm
    \subfloat[]
    [\textit{Left - } Errors in $\|\cdot\|_{\infty,2}$ norm for solutions with quasi-interpolant and Galerkin methods with homogeneous Dirichlet boundary conditions. \textit{Right - } Errors with $\|\cdot\|_{2,2}$ norm for the same problems.\label{fig:5.3b}]
    {\includegraphics[scale = 0.49]{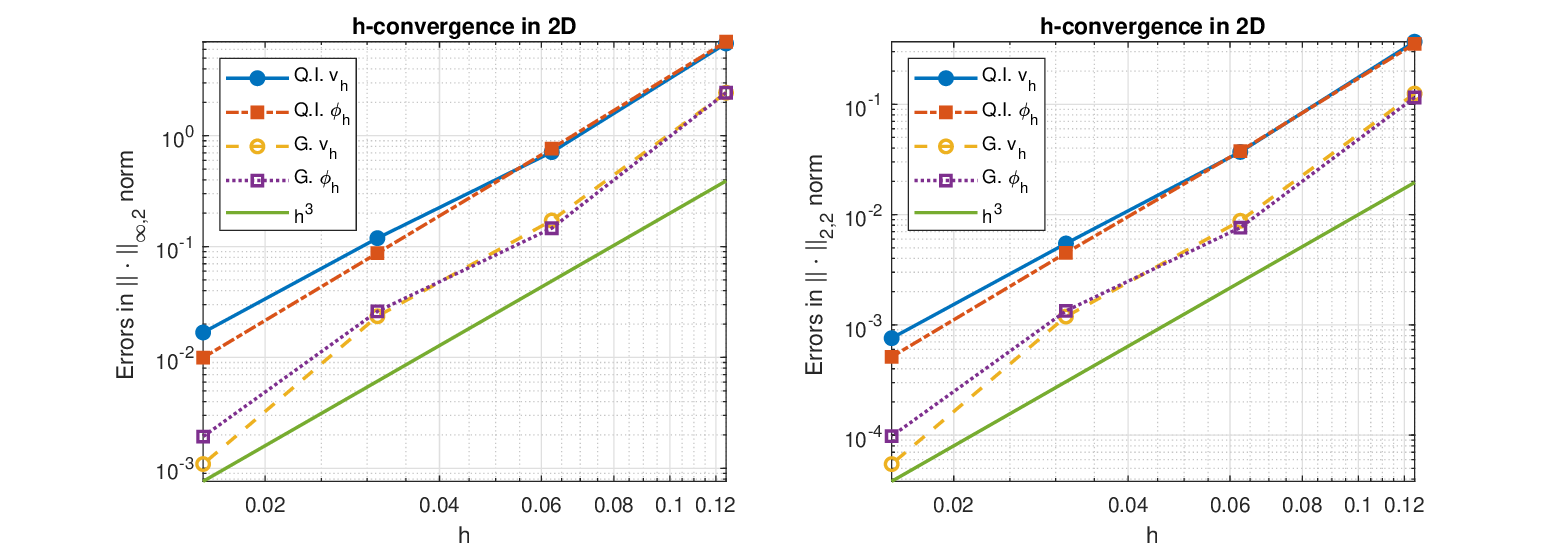}}
    \caption{ (a) Initial data and numerical solutions of \eqref{eqn:5.6} at $T=1$ with ${c} =sin(2\pi {x}_1) sin (2\pi {x}_2) +2$ for Dirichlet homogeneous boundary conditions.  
              (b) $h$-convergence rate estimation for both quasi-interpolant (Q.I.) and Galerkin (G) methods. }
    \label{fig:5.3}
\end{figure}

\subsection{Dirichlet boundary conditions}
\label{sec:sub_Dirichlet_boundary_conditions}
We consider the wave equation as presented in Section \ref{sec:wave}. The domain is $\Omega = \mathcal{F}(\widehat{\Omega})$, where $\widehat{\Omega} = [0,1]^2$ and 
$\mathcal{F}$ is the NURBS map describing the quarter of a ring, as in Figure \ref{fig:new_fig_1}. We assume a full Dirichlet boundary with homogeneous conditions, that is 
$\Gamma_D = \partial\Omega$ and $g = 0$ in \eqref{eqn:2.1}. The time interval is $[0,T]$, where we fix $T=1$. We consider a space dependent coefficient 
$c(x_1,x_2) \coloneqq sin(2\pi {x}_1) sin (2\pi {x}_2) +2 $, which is smooth, with bounded derivatives and bounded away from zero.

We assume that the solution is of the form $u(\boldsymbol{x},t) = \chi(\boldsymbol{x})Re(e^{i\omega t})$, with $\chi \in H^1_0(\Omega)$ solution of 
\begin{equation}\label{eqn:eigenvalue_problem}
    \int_{\Omega} c^2 \nabla \chi \cdot \nabla \Phi \mathrm{d}\boldsymbol{x} = \omega ^2 \int_{\Omega} \chi \Phi \mathrm{d}\boldsymbol{x}, \quad \forall \Phi \in H^{1}_0(\Omega).
\end{equation}
Since $\chi$ is not known explicitly, we use as reference solution its approximation with { splines of degree $\mathbf{p}=(6,6)$} in a uniform mesh with mesh width $h = 1/128$. 
We compute the reference solution, $\chi_h$, relative to the fourth smallest eigenvalue, that is $\omega \approx 4\pi$. Our reference solution is 
$u(\boldsymbol{x},t) = \chi_h(\boldsymbol{x})Re(e^{i\omega t})$, and its velocity and pressure fields are respectively $ \mathbf{v}(\boldsymbol{x},t) = c(\boldsymbol{x})\nabla\chi_h(\boldsymbol{x})Re(e^{i\omega t})$
and $\phi(\boldsymbol{x},t) = \chi_h(\boldsymbol{x})Re(i\omega e^{i\omega t})$.
In order to study the convergence of our method, we estimate the error between the numerical solution and the reference one for different uniform mesh sizes. 
We let the space mesh width $h$ vary in $\{1/8,1/16,1/32,1/64\}$. We choose a uniform partition $\tau $ of the interval $[0,T]$ with $k = 5\times 10^{-4}$. We fix $p=3$, and we set
the initial data from the reference solution, and they can be seen in Figure \ref{fig:5.3a}, for $h = 1/64$. The plot on the right of Figure \ref{fig:5.3a} shows the solutions obtained 
at the final time. For each value of $h$ that we are considering, we record $\|\mathbf{v}-\mathbf{v}_h, \phi-\phi_h\|_{\infty,E}$. 
The plot on the left of Figure \ref{fig:5.3b} shows the two components of the computed errors, that are ${\|\mathbf{v}-\mathbf{v}_h\|_{\infty,2}} \coloneqq \sup_{t \in \tau}\|\mathbf{v}-\mathbf{v}_h\|_{\mathbf{L}^2(\Omega)}  $ 
and ${ \|\phi-\phi_h\|_{\infty,2}} \coloneqq \sup_{t \in \tau}\|\phi-\phi_h\|_{L^2(\Omega)}$. We compare these errors with the ones obtained from solving \eqref{eqn:5.7}, i.e., the standard Galerkin method. 
The plot shows third order convergence rates both for our discretization and for Galerkin.
In order to investigate also convergence with $L^2$ norm in time we will measure the errors in the discrete norm 
$ { \|f(\boldsymbol{x},t)\|_{2,2}} \coloneqq \frac{k}{2}\sum_{t\in \tau \setminus \{T\}}  (\|f(\boldsymbol{x},t)\|_{L^2(\Omega)}^2+\|f(\boldsymbol{x},t+k)\|_{L^2(\Omega)}^2)^\frac{1}{2}$. 
We define the analogous norms for the vector fields. The plot on the right of Figure \ref{fig:5.3b} shows the same error study under 
$h$-refinement using the norm $\|\cdot\|_{2,2}$. We obtain the same error convergence rates, that are of order $h^p$, better then what predicted in the Theorem \ref{theroem5.3}.

\begin{figure}[t]
    \centering
    \subfloat[]
    [Numerical solution for mixed boundary conditions. The arrows show the direction of the velocity field $\mathbf{v}_h$ while the color plot shows the pressure map $\phi_h$.\label{fig:5.4a}]
    {\includegraphics[scale = 0.49]{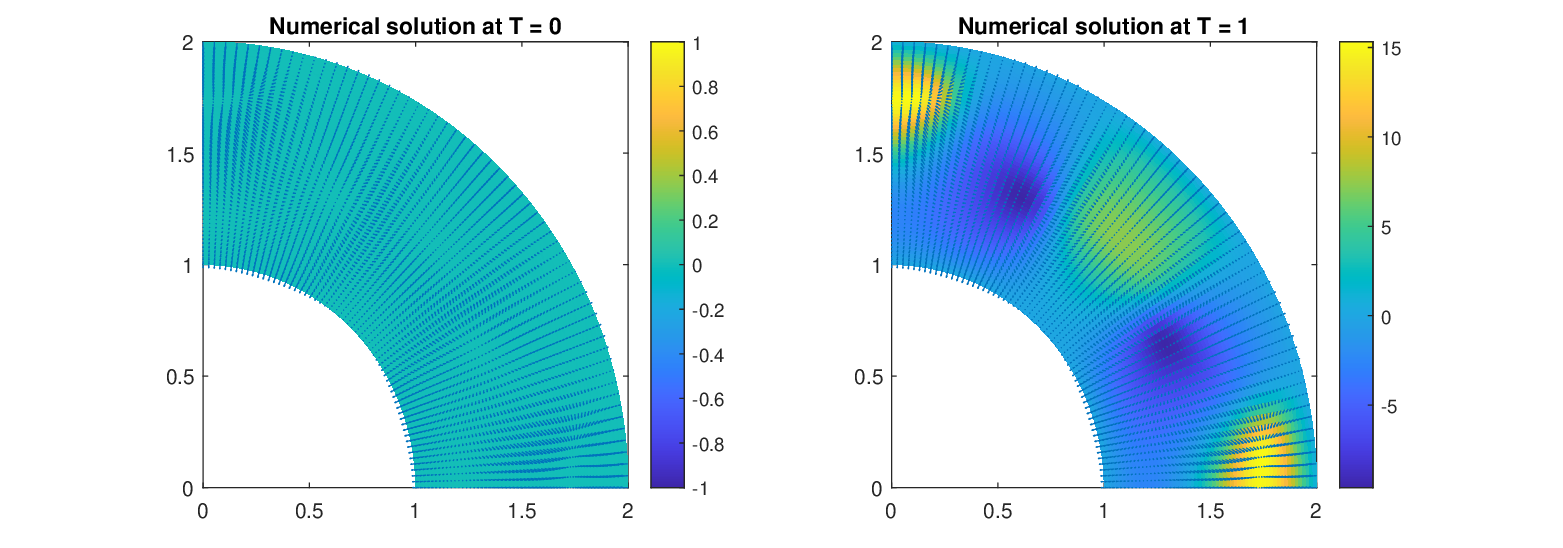}}
    \vskip 0.2cm
    \subfloat[]
    [\textit{Left - } Errors in $\|\cdot\|_{\infty,2}$ norm for solutions with quasi-interpolant and Galerkin methods with mixed boundary conditions. \textit{Right - } Errors with $\|\cdot\|_{2,2}$ norm for the same problems.\label{fig:5.4b}]
    {\includegraphics[scale = 0.49]{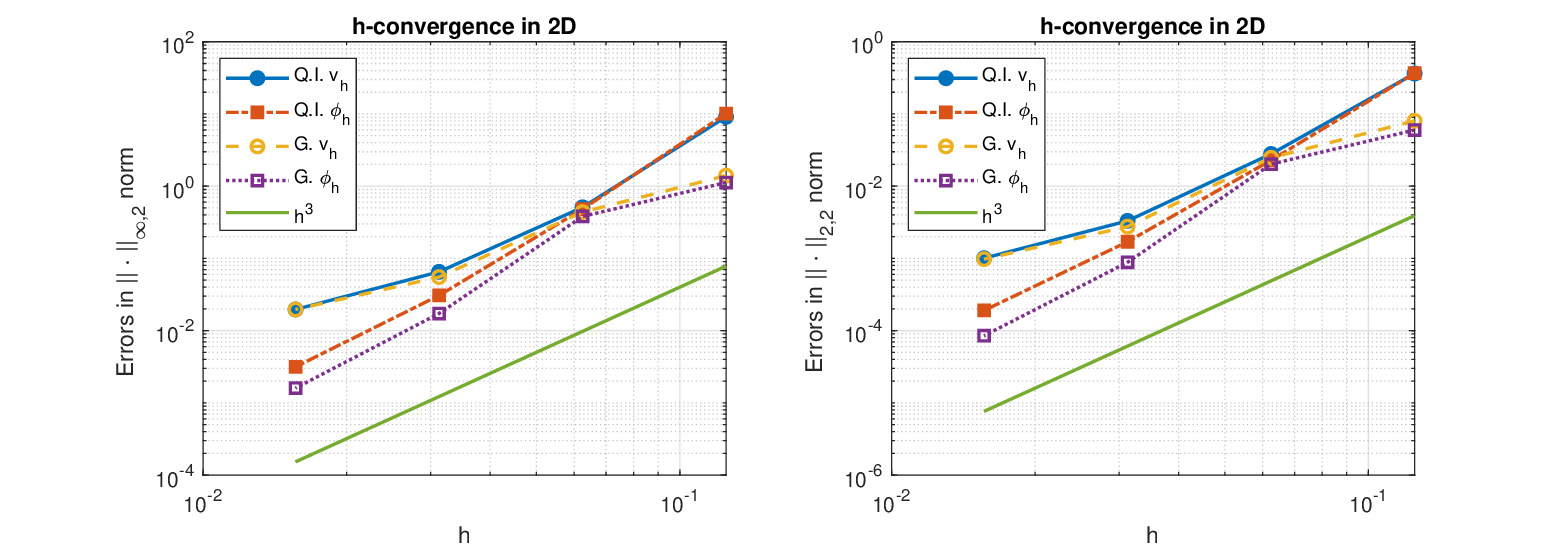}}
    \caption{ (a) Initial data and numerical solutions of \eqref{eqn:5.6} at $T=1$ with ${c} =sin(2\pi {x}_1) sin (2\pi {x}_2) +2$ for mixed boundary conditions.  
              (b) $h$-convergence rate estimation for both quasi-interpolant (Q.I.) and Galerkin (G) methods. }
    \label{fig:5.4}
\end{figure}

\subsection{Mixed boundary conditions}
\label{sec:sub_mixed_boundary_conditions} 
In this second example we consider the domain as above. The boundary of $\Omega$ is defined as in Figure \ref{fig:new_fig_1}, that is $\Gamma_D$ is the Dirichlet boundary, while $\Gamma_P$ is 
the periodic side. The Dirichlet conditions are homogeneous, that is $g =0$. The time interval is $[0,T]$, with $T=1$. Here we used the same coefficient for the example in 
Section \ref{sec:sub_Dirichlet_boundary_conditions}, since it is periodic in $\Gamma_P$. We use separation of variables to construct the reference 
solution, as it was done in the example of Section \ref{sec:sub_Dirichlet_boundary_conditions}. The reference solution is $u(\boldsymbol{x},t)= \chi_h(\boldsymbol{x})Re(e^{i\omega t})$, 
where { $\omega \approx 4\pi$ and $\chi_h $ is the approximation with splines of the solution of \eqref{eqn:eigenvalue_problem} with mixed Dirichlet and periodic boundary condition}.
In order to have a fine approximation, we used $\mathbf{p} = (6,6)$ and a uniform mesh with mesh width $h = 1/128$.  

As it was done for the previous example, we study the convergence of the numerical scheme by estimating the error between the numerical solution and the reference one. 
We discretize the problem as in \eqref{eqn:5.6}, choosing $p=3$ and a uniform mesh. We let the space mesh width $h$ vary in $\{1/8,1/16,1/32,1/64\}$. 
We choose a uniform partition $\tau $ of the interval $[0,T]$ with time step size $k = 5\times 10^{-4}$. 
We compute the initial data from the reference solution, and they can be seen in Figure \ref{fig:5.4a}, for $h = 1/64$. 
The plot on the right of Figure \ref{fig:5.4a} shows the solutions obtained at the final time. 

As it was done for the example in Section~\ref{sec:sub_Dirichlet_boundary_conditions}, for each value of $h$ that we are considering we compute the errors in the energy norm. We also measured the 
errors with the discrete norm { $ \|\cdot\|_{2,2} $}. Figure \ref{fig:5.4b} shows the two components of the computed errors, compared with the ones 
obtained from solving with the standard Galerkin method. Here it seems the error convergence rate is close to a third order of convergence. Although in the last refinement step the convergence is reduced, the same behavior is observed for the solution with the standard Galerkin method.


\begin{figure}[t]
    \centering
    \subfloat[]
    [\textit{Left - } Convergence rates for $\mathbf{v}_h$ and $\phi_h$ measured with $\|\cdot\|_{\infty,2}$ norm. \textit{Right - } Same convergence rates in $\|\cdot\|_{2,2}$ norm. \label{fig:5.5a}]
    {\includegraphics[scale = 0.49]{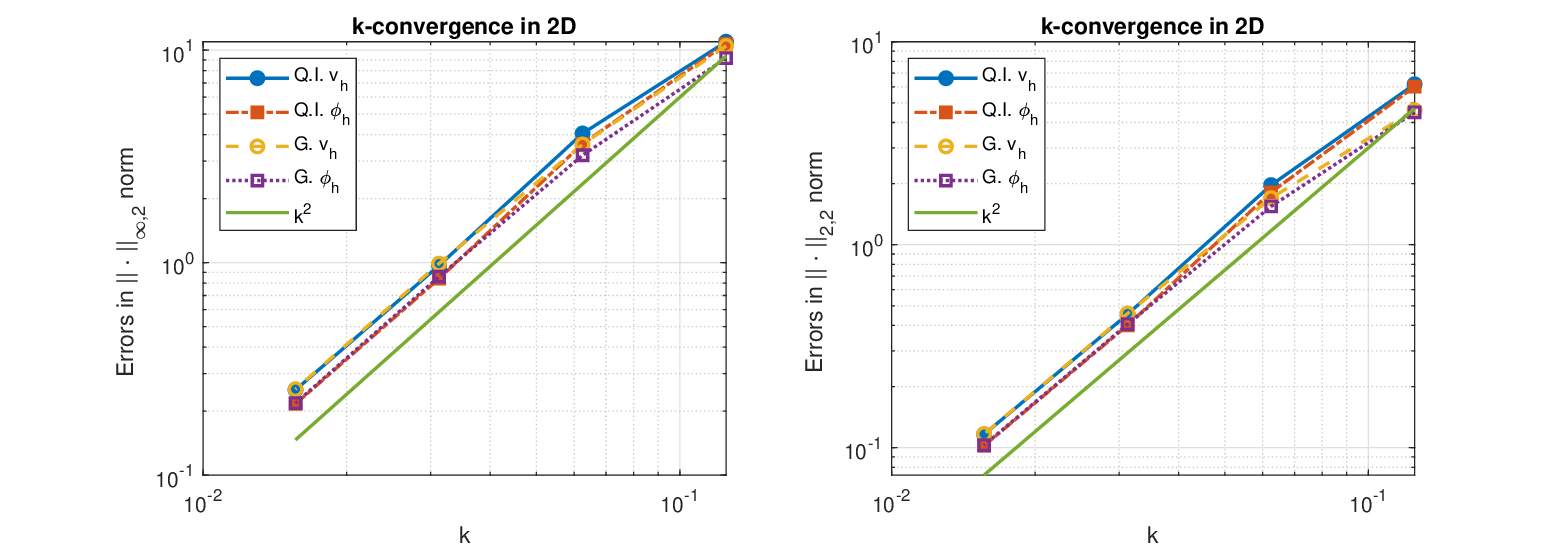}}
    \vskip 0.2cm
    \subfloat[]
    [\textit{Left - } convergence rates for $\mathbf{v}_h$ and $\phi_h$ measured with $\|\cdot\|_{\infty,2}$ norm. \textit{Right - } same convergence rates in $\| \cdot\|_{2,2}$ norm. \label{fig:5.5b}]
    {\includegraphics[scale = 0.49]{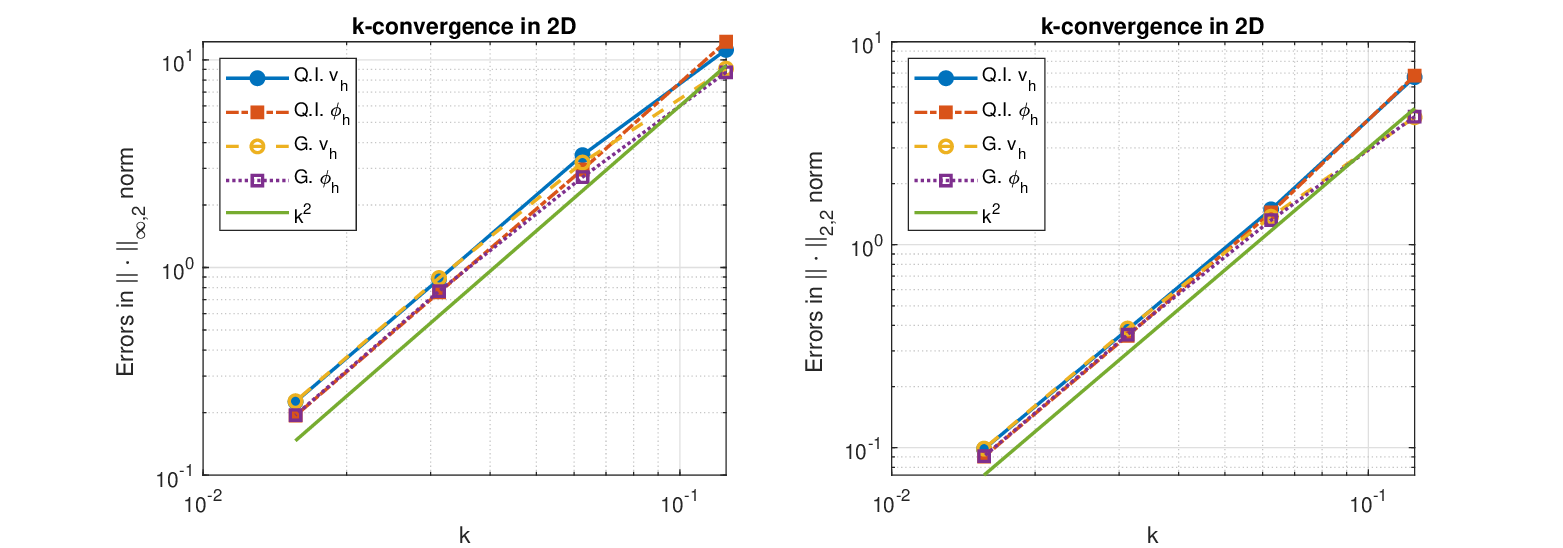}}
    \caption{ Global convergence rate estimation, for quasi-interpolant (Q.I.) and Galerkin (G) methods, for homogeneous Dirichlet boundary conditions (a), while mixed boundary conditions in (b)}
    \label{fig:5.5}
\end{figure}

\subsection{Convergence study under time refinement}
\label{sec:sub_convergence_studi_under_time_refinement}
So long we discussed the $h$-convergence of the quasi-interpolant method. Here we check the global convergence rate when refining both the mesh size $h$ and the time step $k$. 
We let vary $h$ as before from $1/8$ to $1/64$, this time taking the time step $k = h$. We show in Figure \ref{fig:5.5} 
the convergence of both fields $\mathbf{v}_h$ and $\phi_h$ in the same norms introduced above, for the example of Section \ref{sec:sub_Dirichlet_boundary_conditions} in Figure 
\ref{fig:5.5a}, and for the example of Section \ref{sec:sub_mixed_boundary_conditions} in Figure \ref{fig:5.5b}. The obtained convergence rate is of second order, as expected 
from the use of Crank-Nicolson method.


\begin{figure}[t]
    \centering
    \subfloat[]
    [Homogeneous Dirichlet boundary conditions with timesteps $k_1=2e-1$ (in blue) and $k_2 = 1e-2$ (in red). \label{fig:5.6a}]
    {\includegraphics[scale = 0.49]{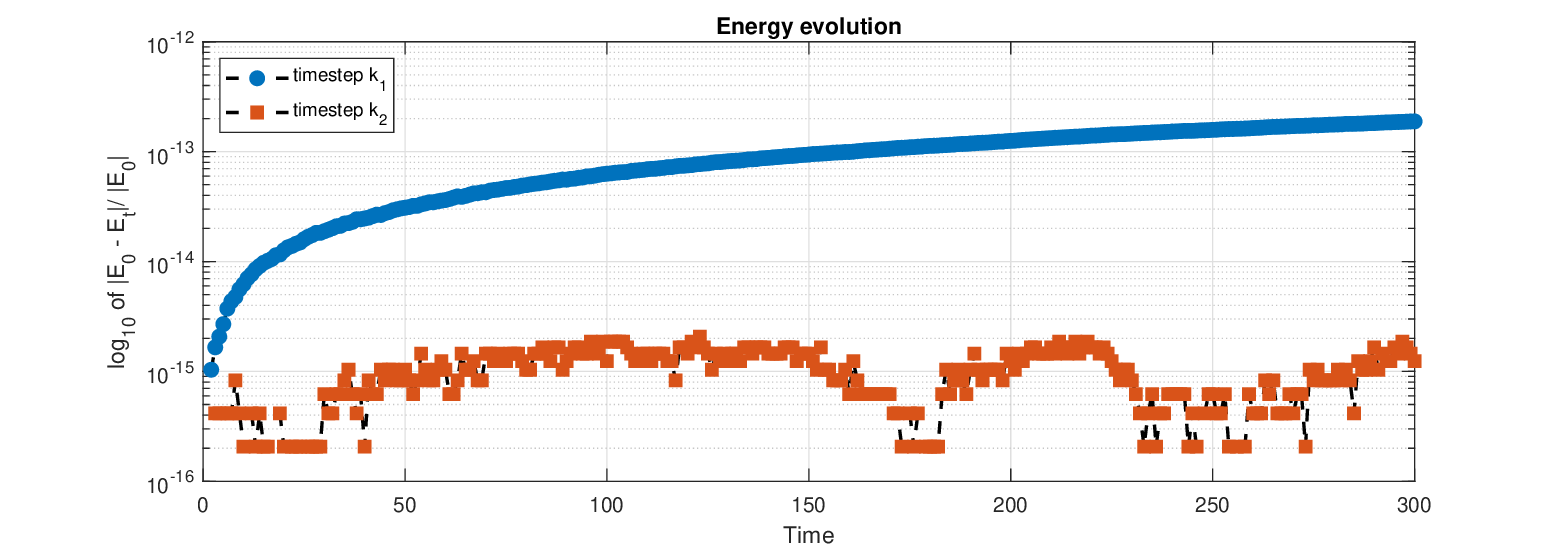}}
    \vskip 0.2cm
    \subfloat[]
    [Mixed boundary conditions with timesteps $k_1=2e-1$ (in blue) and $k_2 = 1e-2$ (in red). \label{fig:5.6b}]
    {\includegraphics[scale = 0.49]{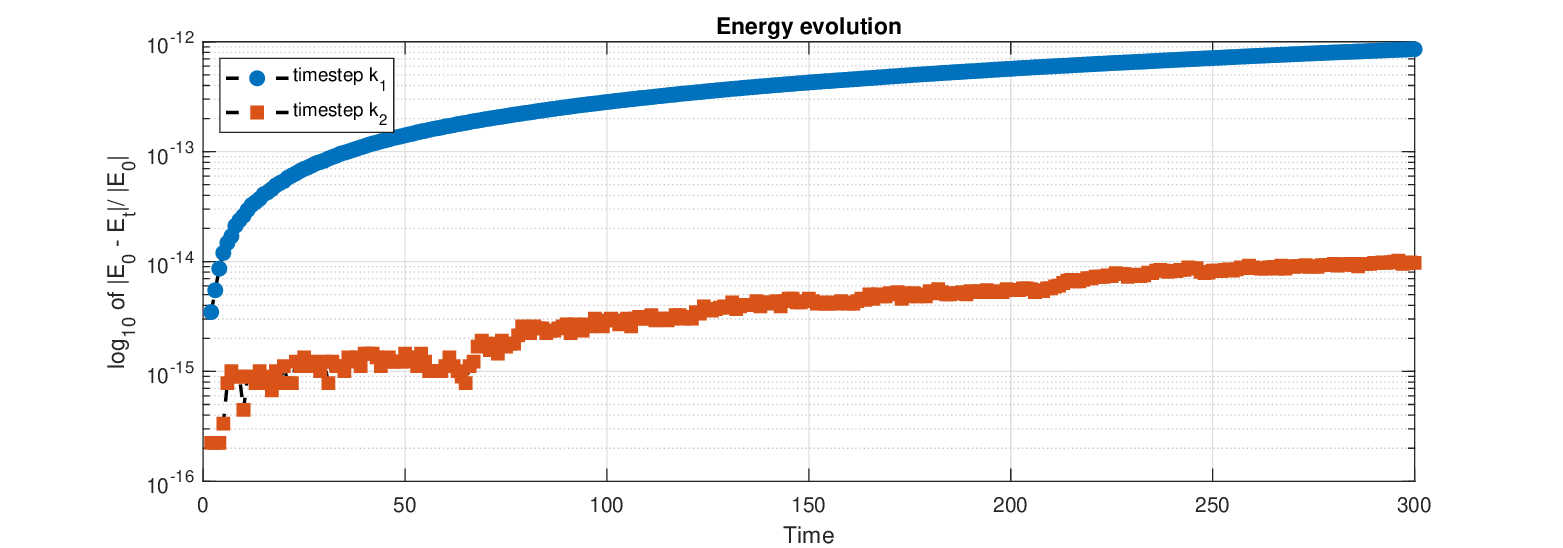}}
    \caption{ Energy conservation plots for Dirichlet homogeneous boundary conditions (a) and mixed boundary conditions (b)}
    \label{fig:5.6}
\end{figure}

\subsection{Energy conservation}
\label{sec:sub_energy_conservation}
In Remark \ref{Remark:energy_conservation} we point out that our numerical scheme is preserving the total energy of the system, as defined in \eqref{eqn:2.5}. 
Here we check the energy conservation for long time simulations, hence we fix $T=300$ and choose two different uniform partitions 
$\tau_1$ and $\tau_2$ of the interval $[0,T]$, the first with step $k_1 = 0.2$, and the second with step size $k_2=0.01$. 
We also fix the meshsize $h =1/32$. Recall that we consider as reference solution a stationary wave with a fixed time-frequency $\omega = 4\pi$, 
and the solution evolves in time as $\Psi(t) \approx Re(e^{i\omega t})$, therefore it performs around two complete oscillations for each unit interval. We solve 
problem \eqref{eqn:5.6} and compute the energy as in \eqref{eqn:2.5} for every $t_n \in \tau \cap \mathbb{Z}$. In Figure \ref{fig:5.6} we show the evolution of the relative 
errors $\frac{|E_0 - E_{t_n}|}{|E_0|}$, where $E_{t_n}$ is the energy evaluated at time $t_n$. Figure \ref{fig:5.6a} shows the evolution of the relative energy errors in 
semi-logarithmic scale for the solution of the problem with homogeneous Dirichlet boundary conditions, as the example in Section 
\ref{sec:sub_Dirichlet_boundary_conditions}. We can see that for both time steps the error remains at the level of round-off errors, 
with lower numbers for the finer time grid. The same plots are reproduced in Figure \ref{fig:5.6b} for the solutions of the problem with mixed boundary conditions of 
Section \refeq{sec:sub_mixed_boundary_conditions}, and we observe a similar behavior. Notice that on the finer grid, for both Dirichlet and mixed boundary conditions, 
we performed $30000$ steps in time without losing energy. The increasing behavior of the errors for the coarse grid solutions seems only due to accumulation of round-off errors. 
We can conclude that the method preserves the total energy of the system as we expected.

\section{Conclusions}
\label{sec:conclusions}

In this paper we proposed an isogeometric discretization of the anisotropic wave equation 
in mixed form, with mixed Dirichlet and periodic boundary conditions.
Our method relies on tensor product projections into spline spaces with good 
approximation properties and that commute with the divergence operator, 
according to the De Rham complex for splines. The conservation of the total energy 
of the system, is imposed weakly by modifying the differential equations according 
to the energy constraint and to projections operators that we introduced.
Regarding time discretization we employed Crank-Nicolson method, which is of second order and 
energy conservative, though alternative conservative methods could be selected.

The method employed for energy preservation finds precedent in the field of magnetohydrodynamics, 
see for example the discretization proposed by\cite{kraus2017gempic} for the Vlasov-Maxwell equations. 
What distinguishes our work, except for the simplified model problem, 
is the a priori error estimate analysis for the full method, not just for the projection approximation. 
In cases where the introduced projections, in addition to commutativity with the divergence operator,
exhibit $L^2$-stability and preserve splines,
we assume good approximation properties of the adjoint projection operator to establish
that the method is of high order - specifically, $h^{m-1}$ assuming the solutions reside in $H^m(\Omega)$ and its vectorial counterpart. 
However, the stability requirement in $L^2$-norm, is not always guaranteed, as exemplified by the projections 
introduced in section \ref{sec:sub_quasi_interpolant_projections}. In light of this, under the less stringent assumption
that the projections are locally stable in the uniform norm, without requiring approximation properties of the adjoint projection operators, 
we prove that the method converges, at least linearly in $h$. 

Numerical tests conducted indicate that, in scenarios where we expect linear convergence, 
the method exhibits high order convergence in $h$, matching the order of convergence achieved 
through a Galerkin approximation without projections. This observation suggests that the proposed 
error estimate might be improved, representing a potential avenue for future development. 

Finally, the conservation of energy is also confirmed by numerical results, where the relative 
error remains of the order of machine precision at the final time.


\appendix
\section{Quasi-interpolant projections} 
\label{Appendix}
\begin{figure}
    \centering
    \subfloat[]
    [Given the function $f_1(x_1,x_2) = (1+x_1)^2sin(\pi x_2/2)$, we highlight the evalutation over knots midpoints and further midpoints respectively in blue green and red. \label{fig:5.1.A}]
    {\includegraphics[scale = 0.38]{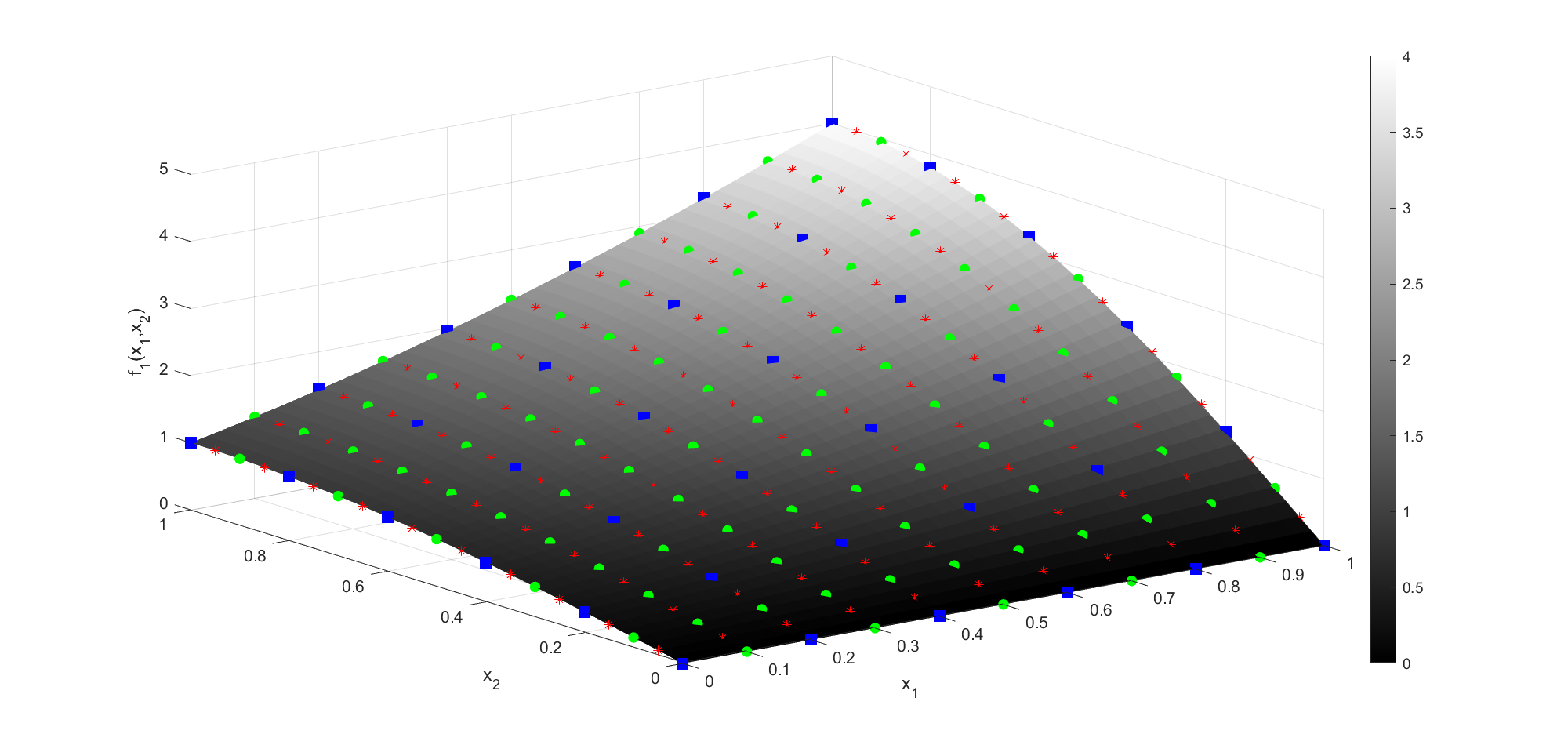}}
    \vskip 0.2cm
    \subfloat[]
    [Here we see the coefficients $\mu_{i_2}(x_1)_{i_2 = 1 }^{n_2-1}$ in blue and green for knots and midpoint. \label{fig:5.1.B}]
    {\includegraphics[scale = 0.38]{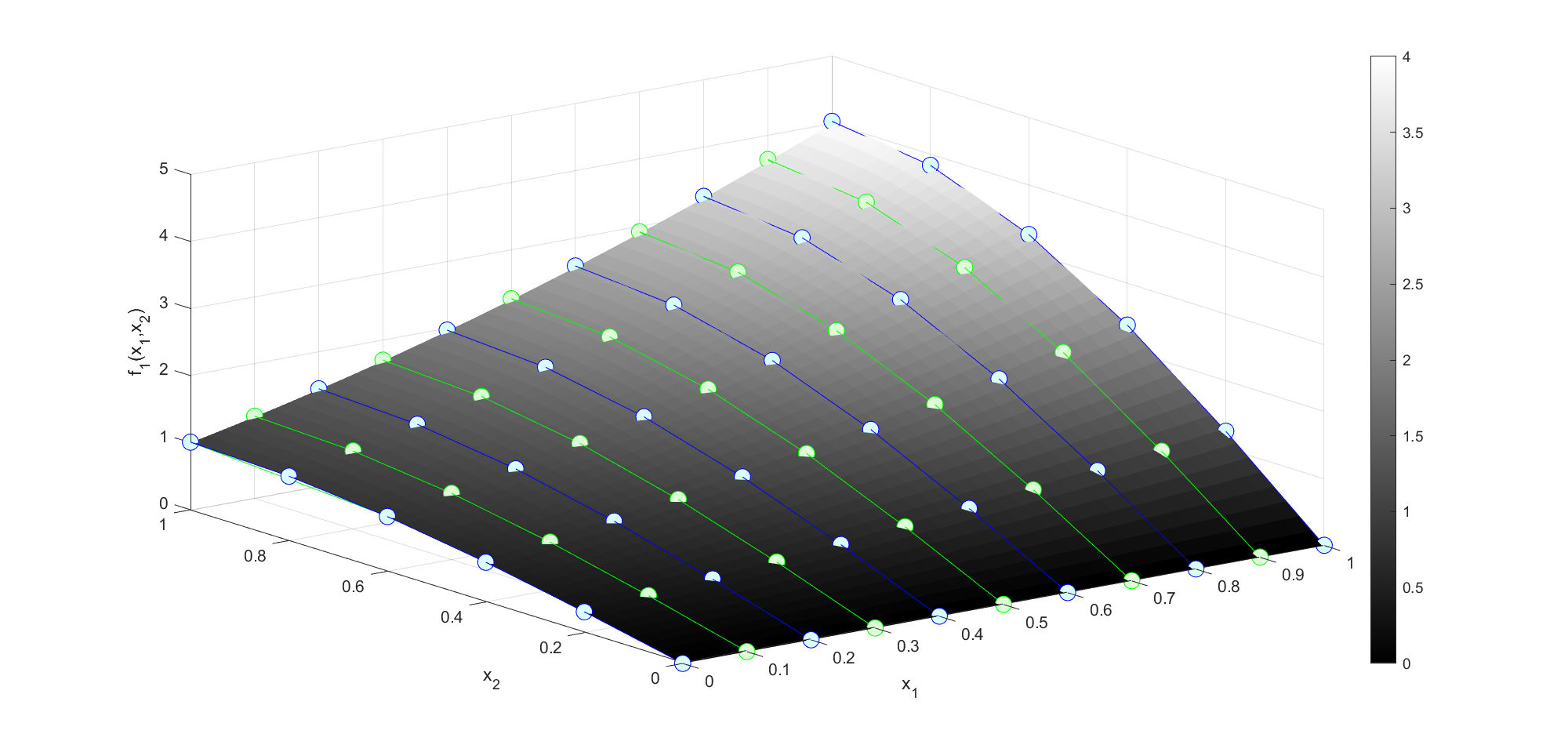}}
    \caption{Graphical visualization of intermediate steps in the computation of $\mu_{i_1,i_2}$. }
    \label{fig:5.1.ABC}
\end{figure}

\subsection{Computation of quasi-interpolants}
\label{sec:sub_computation_of_quasi_interpolants}
In this section we want to discuss the implementation of $\hat{\Pi}^1$, as given in \eqref{eqn:4.3}. The 
construction of the projection in \eqref{eqn:4.13} is analogous. Let us recall that 
$$
\hat{\Pi}^1 = \pi_{p_1} \otimes \pi_{p_2-1}^c \times \pi_{p_1-1}^c \otimes \pi_{p_2}.  
$$
This is applied to project bivariate vector functions $\mathbf{f}= \left(f_1,f_2\right)$, that in our practical 
case are the functions $\hat{c}\mathbf{\hat{b}}_{i,h}$ as in Section \ref{sec:implementation_and_numerical_results}. 
This means that we need to compute $\pi_{p_1}\otimes\pi^c_{p_2-1}(f_1)$ and $\pi^c_{p_1-1}\otimes\pi_{p_2}(f_2)$. 
In order to alleviate notation we fix $p = p_1 = p_2$, and omit the subscript $p$ from the univariate projections. 
With some abuse of notation, these tensorizations must be interpreted as commuting compositions of the kind
\begin{equation*}
    \pi \otimes \pi^c(f_1) = \pi \big(\pi^c (f_1)\big),
\end{equation*}
Notice that $\pi^c$ projection is applied to $f_1$ seen as a function of $x_2$ for a fixed $x_1$, 
while the $\pi$ projection is applied at the result of the previous computation by seeing it as a function of $x_1$ 
for any fixed $x_2$, as it is specified in \cite[Section 2.2.2]{da2014mathematical}. 
We focus on applying $\pi \otimes \pi^c (f_1)$, that is:
\begin{align*}
    \pi\Big(\pi^c\big(f_1(x_1,x_2)\big)\Big)
    &= \pi\left(\sum_{i_2=1}^{n_2-1}\mu_{i_2}(x_1)\hat{D}_{i_2}(x_2)\right)\\ &= \sum_{i_2=1}^{n_2-1}\pi\big(\mu_{i_2}(x_1)\big)\hat{D}_{i_2}(x_2)\\ &= \sum_{i_2=1}^{n_2-1}\sum_{i_1=1}^{n_1} \mu_{i_1,i_2}\hat{B}_{i_1}(x_1)\hat{D}_{i_2}(x_2),
\end{align*}
and the coefficients $\mu_{i_1,i_2}$ will depend only on $f_1$. In order to compute these coefficients 
$\mu_{i_1,i_2}$ we may perform the following steps:
\begin{enumerate}
    \item First evaluate $f_1$ on the Cartesian grid given by breakpoints and midpoints of the first univariate 
            direction, and breakpoints, midpoints and further midpoints of the second univariate direction, 
            see Figure \ref{fig:5.1.A}.
    \item Use these pointwise evaluations to project $f_1(x_1,\cdot)$ with $\pi^c$. This means to perform 
            $\pi^c(f_1(x_1, \cdot ))$ for every $x_1$ in the set of breakpoints and midpoint of the first 
            univariate direction. We recall that this requires to use Cavalieri-Simpson composite quadrature 
            formula. We end up with a set of coefficients $\{\mu_{i_2}(x_1)\}_{i_2=1}^{n_2-1}$, for every 
            fixed $x_1$. In Figure \ref{fig:5.1.B}, these coefficients are highlighted in blue and green 
            circles, respectively for breakpoints and midpoints of $x_1$ direction, for the case $p=2$.
    \item Due to the choice of step 1, for a fixed index $i_2$, the function $\mu_{i_2}(x_1)$ is already evaluated 
            over the breakpoints and midpoints of $x_1$ direction. Therefore we can perform a projection 
            $\pi(\mu_{i_2})$ for every $i_2 = 1, \dots , n_2-1$. We end up with a set of coefficients 
            $\{\mu_{i_1,i_2}\}_{i_1=1,i_2=1}^{n_1,n_2-1}$ over the parametric domain that uniquely identify the 
            bivariate spline that approximates $f_1$. 
\end{enumerate}
The procedure to compute $\pi^c \otimes \pi(f_2)$ is analogous, we just need to swap the evaluation points required 
per univariate direction. After projecting the two scalar components $f_1$ and $f_2$, we have two sets of 
coefficients whose joint corresponds to the degrees of freedom of the spline function that approximates 
$\mathbf{f}$ in the discrete space $\hat{X}^1_h$.

\section*{Acknowledgments} 

\bibliographystyle{siamplain}
\bibliography{references}
\end{document}